\newcommand{\Be}{\begin{equation}}
\newcommand{\Ee}{\end{equation}}
\newcommand{\Bea}{\begin{eqnarray}}
\newcommand{\Eea}{\end{eqnarray}}
\newcommand{\Bel}{\begin{align}}
\newcommand{\Eel}{\end{align}}
\newcommand{\Beas}{\begin{eqnarray*}}
\newcommand{\Eeas}{\end{eqnarray*}}
\newcommand{\Benu}{\begin{enumerate}}
\newcommand{\Eenu}{\end{enumerate}}
\newcommand{\Bi}{\begin{itemize}}
\newcommand{\Ei}{\end{itemize}}
\newcommand\supp{\operatorname{supp}}
\def\R{{\mathbb R}}
\def\Z{{\mathbb Z}}
\theoremstyle{plain}
\newtheorem{thm}{Theorem}[section]
\newtheorem{lem}[thm]{Lemma}
\newtheorem{prop}[thm]{Proposition}
\theoremstyle{remark}
\newtheorem{rmk}{Remark}  
\theoremstyle{definition}
\numberwithin{equation}{section}
\newcommand{\RNum}[1]{\uppercase\expandafter{\romannumeral #1\relax}}
\newcommand{\bA}{\mathbb A}
\begin{document}

\makeatletter
\@namedef{subjclassname@2020}{\textup{2020} Mathematics Subject Classification}
\makeatother
\subjclass[2020]{Primary 42B25,  Secondary 35S30}
\keywords{Spheircal maximal function, weighted inequality}

\author[Juyoung Lee]{Juyoung Lee}

\address{Research Institute of Mathematics, Seoul National University, Seoul 08826, Republic of Korea}
\email{ljy219@snu.ac.kr}

\title[The critical weighted inequalities of the spherical maximal function]
{The critical weighted inequalities of the spherical maximal function}

\begin{abstract}   
Weighted inequality on the Hardy-Littlewood maximal function is completely understood while it is not well understood for the spherical maximal function. For the power weight $|x|^{\alpha}$, it is known that the spherical maximal operator on $\R^d$ is bounded on $L^p(|x|^{\alpha})$ only if $1-d\leq \alpha<(d-1)(p-1)-d$ and under this condition, it is known to be bounded except $\alpha=1-d$. In this paper, we prove the case of the critical order, $\alpha=1-d$.
\end{abstract}

\maketitle

\section{Introduction}
Let $d\geq 2$ and $d\sigma$ be the normalized Lebesgue measure on $\mathbb{S}^{d-1}$. Then, for $t>0$, we define the spherical average of $f$ by
\[ A_tf(x)=\int_{\mathbb{S}^{d-1}}f(x-ty)d\sigma(y). \]
It is very clear that $A_t$ is a bounded operator on $L^p$ for any $1\leq p\leq \infty$. When we take the supremum on $t>0$, we get the following (global) spherical maximal function:
\[ Mf(x)=\sup_{t>0}|A_tf(x)|. \]
Stein \cite{S2} proved that $M$ is bounded on $L^p$ if and only if $p>d/(d-1)$ when $d\geq 3$. Later, Bourgain \cite{B} obtained that $M$ is bounded on $L^p$ if and only if $p>2$ when $d=2$. A few years later, Mockenhaupt, Seeger, and Sogge \cite{MSS} proved the same result using the local smoothing estimate.

So far, many researches have been devoted to studying the spherical maximal function in various settings. For example, we can define the local spherical maximal function as follows:
\[ M_cf(x)=\sup_{1<t<2} |A_tf(x)|. \]
We have so called $L^p$-improving phenomenon for $M_c$, which means that $M_c$ is bounded from $L^p$ to $L^q$ for some $p<q$. Schlag, Sogge \cite{SchS} obtained the sharp range of $p,q$ except for some boundary points, and S. Lee \cite{L} obtained the result on the boundary lines except the case $(p,q)=(5/2,5)$.

In this paper, we are interested in the weighted inequalities of $M$. This was first considered by Duoandikoetxea, Vega \cite{DV}. They proved that $M$ is bounded on $L^p(|x|^{\alpha})$ if $p>d/(d-1)$ and $1-d<\alpha<(d-1)(p-1)-1$. This result is sharp except the critical case, $\alpha=1-d$. For the critical weight $|x|^{1-d}$, it is known that $M$ is bounded on $L^p_{rad}(|x|^{1-d})$ when $1<p\leq \infty$ for $d\geq 3$, and when $2<p\leq \infty$ for $d=2$ (see \cite{DMO}).  Here, $L^p_{rad}(|x|^{1-d})$ is the subspace of $L^p(|x|^{1-d})$ consists of radial functions. Recently, Nowak, Roncal, Szarek \cite{NRS} obtained sharp conditions for the spherical mean Radon transform on radial functions to be bounded on weighted spaces. Meanwhile, Lacey \cite{La} proved that sparse bounds on the spherical maximal function implies some weighted inequalities. In \cite{La}, the author tried to characterize the class of weight which makes the spherical maximal function bounded, while it is mentioned that $A_p$-weight is not a correct tool to characterize such weight (see also \cite{CGcG}). The bilinear weighted inequalities of the spherical maximal function is also considered by Roncal, Shrivastava, Shuin \cite{RSS}.

The following is the main theorem which says that $M$ is bouned on the critical weighted space $L^p(|x|^{1-d})$.

\begin{thm}\label{main thm}
\begin{enumerate}
    \item Let $d=2$ Then, $M$ is bounded on $L^p(|x|^{-1})$ if and only if $p>2$.
    \item Let $d\geq 3$. Then, $M$ is bounded on $L^p(|x|^{1-d})$ when $p\geq 2$.
\end{enumerate}
\end{thm}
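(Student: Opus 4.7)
The natural strategy is a dyadic annular decomposition combined with a Schur-type convolution bound. Write $f = \sum_k f_k$ with $f_k$ supported on the annulus $A_k = \{2^k \leq |x| < 2^{k+1}\}$, and abbreviate $V = L^p(|x|^{1-d})$. The annuli being disjoint yields $\|f\|_V^p = \sum_k \|f_k\|_V^p$, and since $|A_t f| \leq \sum_k |A_t f_k|$ pointwise in $t$ we have $Mf \leq \sum_k Mf_k$. Hence it suffices to establish a matrix estimate
\[
\|(Mf_k)\chi_{A_j}\|_V \leq C\,a_{|j-k|}\,\|f_k\|_V, \qquad \sum_{n \geq 0} a_n < \infty,
\]
because Young's convolution inequality on $\Z$ applied to $(\|f_k\|_V)_k$ then gives $\|Mf\|_V \leq C\|a\|_{\ell^1}\|f\|_V$.

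A crucial simplification comes from the scale invariance of the weight: $|x|^{1-d}\,dx = dr\,d\sigma$ in polar coordinates, so under the dilation $f(x)\mapsto f(2^k x)$ both $\|f\|_V$ and $\|Mf\|_V$ scale by the same factor. A direct computation shows that $\|(Mf_k)\chi_{A_j}\|_V/\|f_k\|_V$ depends only on $j - k$, so it suffices to prove the bound for $k = 0$, i.e., for $f$ supported on $A_0$ with $\|f\|_V \approx \|f\|_{L^p}$. On the diagonal block $|j| \leq C_0$ the bound $\|(Mf)\chi_{A_j}\|_V \approx \|(Mf)\chi_{A_j}\|_{L^p} \leq \|Mf\|_{L^p} \leq C\|f\|_{L^p}$ is immediate from the global $L^p$-boundedness of $M$ (Stein for $d \geq 3$ and $p \geq 2$; Bourgain for $d = 2$ and $p > 2$), which applies because our range lies strictly above the Stein--Bourgain threshold $d/(d-1)$ (and is allowed to touch $p = 2$ only when $d \geq 3$).

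The essential work is the off-diagonal decay for $|j| \gg 1$. In the far field $j \gg 1$, the key geometric observation is that for $x \in A_j$ and $f$ supported in $A_0$ only $t \in [|x|-2,|x|+2]$ produces a nonzero $A_t f(x)$, and the intersection $S(x,t) \cap A_0$ is a single spherical cap of normalized area at most $C\,2^{-j(d-1)}$. Rescaling by $|x|$ reduces the problem to bounding the local spherical maximal $M_c$ applied to $g(y) = f(|x|y)$, which is supported in a ball of radius $\sim 2^{-j}$ near the origin. Combining the geometric smallness of the cap with the $L^p\to L^q$ improving of $M_c$ (Schlag--Sogge and S.~Lee) and Hölder's inequality in the angular variable yields a bound of the form $\|(Mf)\chi_{A_j}\|_V \leq C\,2^{-\varepsilon j}\|f\|_V$ with $\varepsilon = \varepsilon(p,d) > 0$. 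The near field $j \ll -1$ is handled by a parallel argument: there $x$ is close to $0$ and the sphere $S(x,t)$ lies close to $S(0,t)$, so the integrand of $A_t f(x)$ is a small perturbation of $A_t f(0)$, and an analogous rescaling together with the singularity of $|x|^{1-d}$ at the origin (which is absorbed into the measure factor $dr$) produces a matching decay $2^{-\varepsilon|j|}$.

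The main obstacle, and the reason the critical case $\alpha = 1-d$ has been open, is obtaining this off-diagonal decay with an $\ell^1$-summable rate uniformly throughout the claimed range. Because the weight is critical one cannot take $\alpha = 1-d+\varepsilon$ and interpolate from an existing result: the exponent $\varepsilon(p,d)$ must be produced directly from the $L^p$-improving of $M_c$ combined with the small-cap geometry. This is also what forces $p = 2$ to be excluded when $d = 2$: the $L^p$-improving of the circular maximal function degenerates as $p \to 2^+$, so the exponent $\varepsilon$ vanishes at that endpoint and the Schur argument breaks down exactly as the inequality itself is known to fail.
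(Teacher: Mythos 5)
Your proposal reduces the theorem to a Schur-type matrix estimate $\|(Mf_k)\chi_{A_j}\|_V \le a_{|j-k|}\|f_k\|_V$ with $\sum_n a_n < \infty$, using only a dyadic decomposition in physical space. This is essentially the Duoandikoetxea--Vega scheme, and it genuinely fails at the critical exponent $\alpha = 1-d$: the near-field entries $a_n$ (the case where the target annulus $A_j$ lies closer to the origin than the source $A_k$) do \emph{not} decay, so the $\ell^1$-summability hypothesis of your Young/Schur step is false. To see this concretely, take $k = 0$ and $f_\delta = \chi_{\{z : ||z|-1| < \delta\}}$ for small $\delta > 0$. Then $\|f_\delta\|_{V}^p = \|f_\delta\|_{L^p(|x|^{1-d})}^p \approx \delta$, while for $|x| \lesssim \delta$ one has $A_1 f_\delta(x) \approx 1$ and hence $Mf_\delta(x) \approx 1$; consequently, with $j = \lfloor\log_2\delta\rfloor$,
\[
\int_{A_j} |Mf_\delta(x)|^p\,|x|^{1-d}\,dx \approx \int_{|x|\sim\delta}|x|^{1-d}\,dx \approx \delta \approx \|f_\delta\|_V^p.
\]
Letting $\delta \to 0$ shows $a_n \gtrsim 1$ for every $n \ge 0$. (If one redoes this computation with the weight $|x|^\alpha$, $\alpha > 1-d$, the same example gives $a_n \lesssim 2^{-n(\alpha + d - 1)/p}$, which is exactly why the non-critical range can be treated by this scheme; the exponent degenerates to zero precisely at $\alpha = 1-d$.) Your appeal to the $L^p\to L^q$ improving of $M_c$ does not rescue the argument: the Hölder step requires $M_c : L^p \to L^q$ with $q > dp$, but the sharp improving region (Schlag--Sogge, Lee) is bounded below by the line $q = dp$, so no admissible pair achieves strict inequality.

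The paper's proof does something you cannot do with a purely spatial decomposition: it couples a Littlewood--Paley decomposition in frequency with the dyadic decomposition in $x$. The key estimate, Proposition~\ref{mj sph}, shows that the frequency-localized piece $M_j = M_c\mathcal{P}_j$ concentrates its $L^p(|x|^{1-d})$ mass precisely on the annulus $|x|\sim 2^{-j}$, with exponential decay $2^{-\delta|j-k|}$ off the ``anti-diagonal'' $j \approx k$. This orthogonality between frequency scale and physical scale is exactly what the test functions $f_\delta$ above exploit in the opposite direction: they have broad Fourier support, and no single frequency piece $\mathcal{P}_j f_\delta$ can be simultaneously large near all scales. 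Proving the anti-diagonal decay in the regime $j \le k$ (frequency low relative to the small spatial scale, which is precisely the near-field obstruction in your approach) requires the spherical-harmonic/Plancherel argument of Section~4, not $L^p$-improving; the regime $j > k$ uses critical local smoothing estimates for the wave operator. Your proposal contains neither ingredient and, as written, cannot close the near-field gap.
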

We believe that when $d\geq 3$, $M$ is also bounded on $L^p(|x|^{1-d})$ when $d/(d-1)<p<2$. To prove Theorem \ref{main thm}, we first consider $M_c$ and decompose it using the Littlewood-Paley decomposition. To recover the estimate for $M$, we use a modification of the standard argument (see, for example, \cite{B}, \cite{Sch}). For the purpose, we define the Littlewood-Paley decomposition. Let $\chi$ be a smooth function such that $\supp\chi\subset (1-10^{-2},2+10^{-2})$ and $1=\sum_{j\in\Z}\chi(s/2^j)$ for all $s\neq 0$. We define the projection $\mathcal{P}_j$ by $\widehat{\mathcal{P}_j f}(\xi)=\widehat{f}(\xi)\chi_j(|\xi|)$ where $\chi_j(x)=\chi(s/2^j)$. Then, we define
\[ M_jf(x)=M_c\mathcal{P}_jf(x). \]
Also, we denote $\tilde{\chi} : \R^d\to \R$ by a smooth function which is supported on $\{ x : 1/100<|x|<100 \}$ and equals to 1 when $1/10<|x|<10$, and then $\tilde{\chi}_j(x)=\tilde{\chi}(x/2^j)$. We obtain sharp estimates of $M_j$ as follows.
\begin{prop}\label{mj sph}
    Let $d\geq 2$, $j,k\geq 0$, and $d/(d-1)<p<\infty$. Then, for $\delta=\delta(d,p)>0$, we have
    \begin{align}\label{mj sph est}
        &\Vert \chi_{-k}(|\cdot|)M_jf\Vert_{L^p(|x|^{1-d})}^p\\
        &\lesssim 2^{-\delta|j-k|}\Vert \tilde{\chi}\mathcal{P}_j f\Vert_{L^p(|x|^{1-d})}^p+\sum_{m\in\Z} 2^{-Nj-(d-1)|m|-\delta k} \Vert \chi_m\mathcal{P}_jf \Vert_{L^p(|x|^{1-d})}^p\nonumber
    \end{align}
    for any $N>0$.
\end{prop}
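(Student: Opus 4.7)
I would decompose $\mathcal{P}_j f = \tilde\chi\mathcal{P}_j f + (1-\tilde\chi)\mathcal{P}_j f$ and use $M_c(F+G)\le M_cF+M_cG$ to split the LHS of \eqref{mj sph est} into a main term, controlled by $\tilde\chi\mathcal{P}_j f$, and a tail, further expanded via $1=\sum_m\chi_m(|\cdot|)$ as $\sum_{|m|\ge 4}\chi_m(|\cdot|)\mathcal{P}_j f$. The main piece is handled by frequency-localized smoothing for $M_c$, while the tails are handled by pointwise vanishing together with Schwartz decay.

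\textbf{Tail terms.} For $|x|\le 2^{-k}(2+10^{-2})$ with $k\ge 3$, $t\in(1,2)$, and $y\in S^{d-1}$, the argument $|x-ty|$ lies in $(1/2,5/2)$ and is therefore disjoint from $\{|z|\sim 2^m\}$ whenever $|m|\ge 4$. Thus $A_t(\chi_m\mathcal{P}_j f)(x)\equiv 0$ and the tails simply vanish in this regime. The finitely many surviving cases (corresponding to $k\in\{0,1,2\}$ or $|m|\le 3$) are dispatched by $\|A_tF\|_\infty\le\|F\|_\infty$ combined with Bernstein applied to the frequency-localized $\mathcal{P}_j f$; the $O(1)$ spatial separation between the shell $\{|y|\sim 2^m\}$ and the integration sphere forces the integrand to probe only the Schwartz tail of the Littlewood--Paley kernel, producing the factor $2^{-Nj}$ for any $N>0$. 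The weight $|y|^{1-d}\sim 2^{-m(d-1)}$ on $\{|y|\sim 2^m\}$ yields the $2^{-(d-1)|m|}$ factor, and a volume comparison on the output shell gives $2^{-\delta k}$.

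\textbf{Main term.} Since $|x|^{1-d}\asymp 2^{k(d-1)}$ on $\{|x|\sim 2^{-k}\}$ and $\asymp 1$ on $\supp\tilde\chi$, it suffices to prove
\[ \bigl\|\chi_{-k}(|\cdot|)M_c(\tilde\chi\mathcal{P}_j f)\bigr\|_{L^p(\R^d)}^p \lesssim 2^{-k(d-1)-\delta|j-k|}\,\|\tilde\chi\mathcal{P}_j f\|_{L^p(\R^d)}^p. \]
The input $g:=\tilde\chi\mathcal{P}_j f$ is spatially compact and essentially frequency-localized at $|\xi|\sim 2^j$; the Schwartz frequency tails from multiplying by $\tilde\chi$ are absorbed by the $2^{-Nj}$ error sum. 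For $k\le j$, I would invoke the Mockenhaupt--Seeger--Sogge local smoothing estimate $\|M_c\mathcal{P}_j h\|_{L^p}\lesssim 2^{-\varepsilon(p)j}\|h\|_{L^p}$ (valid for $p>d/(d-1)$), interpolated against an $L^p\to L^q$ improving bound for some $q>p$, and combined with the H\"older loss $2^{-kd(1/p-1/q)}$ from restriction to the output shell. For $k>j$, I would use that $A_t\mathcal{P}_j g$ varies slowly at scales smaller than $2^{-j}$, so the spherical average is essentially constant on the fine output shell; a stationary-phase bound $\|A_t\mathcal{P}_j g\|_{L^\infty}\lesssim 2^{-j(d-1)/2}\|\widehat g\,\chi_j\|_1$ together with Bernstein then produces the gain $2^{-\delta(k-j)}$.

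\textbf{Main obstacle.} The crux is calibrating $\delta$ uniformly in the two regimes $k\le j$ and $k>j$, while ensuring the decay beats the weight blowup $2^{k(d-1)}$. This demands a delicate interplay between frequency-localized smoothing (which exploits oscillation along the sphere via stationary phase on $\widehat{d\sigma}$), H\"older on the output shell, and Bernstein on the input. I expect that a direct Fourier-integral-operator analysis of $A_t\mathcal{P}_j$, rather than a pure black-box of existing $L^p$ bounds, will be necessary in order to obtain the sharp exponent appearing in \eqref{mj sph est}.
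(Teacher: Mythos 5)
Your proposal does not close the hardest case, $j>k$ (and in particular $j=k$), and the step you leave open is precisely the paper's key external input. After H\"older on the output shell, your route requires a frequency-localized improving bound of the form $\Vert M_j g\Vert_{L^q}\lesssim 2^{j(\frac1p-\frac dq)}\Vert g\Vert_{L^p}$ for some exponents with $d/q>1/p$, with \emph{no} $\epsilon$-loss: at $j=k$ the weight $2^{k(d-1)}$ is exactly balanced by the H\"older gain $2^{-kd(1/p-1/q)}$ and the decay $2^{-j(d/q-1/p)}$, and the focusing example (a function concentrated on a $2^{-j}$-neighborhood of the unit sphere, used in the paper's introduction) shows that $2^{j(\frac1p-\frac dq)}$ is the \emph{sharp} operator norm, so there is zero slack. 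Interpolating the Mockenhaupt--Seeger--Sogge $L^p$ gain $2^{-\varepsilon(p)j}$ (which on the diagonal is far below the critical line $(d-1)/p$) with a generic $L^p\to L^q$ improving bound can only give the critical exponent at the improving endpoint itself; it cannot manufacture a no-loss critical bound at an intermediate point. This is exactly what the paper imports: S.~Lee's critical $L^p$--$L^q$ local smoothing for the wave operator (via Wolff's bilinear cone restriction) when $d=2$, the Strichartz-based estimate when $d\geq 3$, and Heo--Nazarov--Seeger when $d\geq 4$, transferred to $M_j$ through the asymptotics \eqref{bessel} and Lemma \ref{sobo}. Your closing ``main obstacle'' paragraph in effect concedes that this step is missing, so the case $j\geq k$ is not proved.

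Your treatment of the other regime $k>j$ is essentially sound at $p=2$ and is in fact more elementary than the paper's: the pointwise bound $\Vert A_t\mathcal P_j g\Vert_{\infty}\lesssim 2^{-j(d-1)/2}\Vert\widehat g\,\chi_j\Vert_1\lesssim 2^{j/2}\Vert g\Vert_{2}$ combined with the fact that $\int_{|x|\sim 2^{-k}}|x|^{1-d}dx\approx 2^{-k}$ gives the gain $2^{-(k-j)/2}$, whereas the paper expands in spherical harmonics and uses Plancherel in $t$ together with the uniform bound on $\phi_k$. But two gaps remain even there. First, the proposition is claimed for all $d/(d-1)<p<\infty$; your Bernstein bookkeeping fails for $p<2$, and the paper covers that range by interpolating with the Duoandikoetxea--Vega endpoint estimate at $p=d/(d-1)$ restricted to the annulus (no loss) and the trivial $L^\infty$ bound --- an input you never invoke. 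Second, the reduction to $\tilde\chi\mathcal P_jf$ is not just ``absorbed by the $2^{-Nj}$ error'': after multiplying by $\tilde\chi$ the function is no longer frequency-localized, so before applying any frequency-localized bound you must re-decompose it in frequency (the paper's $\mathcal Q_j^1,\mathcal Q_j^2,\mathcal P_{j'}$ splitting) and convert the off-frequency pieces, via kernel decay estimates, into the error terms $\Vert\chi_m\mathcal P_jf\Vert_{L^p(|x|^{1-d})}^p$ appearing in \eqref{mj sph est}; this is routine but not automatic, and the piece at comparable frequencies requires its own additional decomposition in the case $j\leq k$.
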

\begin{rmk}
    In the above proposition, the main term is obviously the first term on the right hand side of \eqref{mj sph est}, and the other terms come from the Schwartz tail. Throughout the paper, we will encounter similar issues several times. Since those terms have sufficiently nice decay, they are always harmless. Nevertheless, we rigorously handle all those situations. The reader who believes that the Schwartz tail is harmless may ignore these parts, precisely, every term containing $2^{-Nj}$.
\end{rmk}

We briefly explain the novelty of this paper. Consider a suitable bounded function $F_j$ which is supported in an annulus $\{ x : ||x|-1|<2^{-j} \}$ and the Fourier transform of $F_j$ is supported on $\{ \xi : |\xi|\sim 2^j\}$. This example says that $\Vert M_j F_j\Vert_{L^p(|x|^{1-d})}\gtrsim 2^{-j/p}\approx \Vert F_j\Vert_{L^p(|x|^{1-d})}$ since $|M_jF_j(x)|\gtrsim 1$ when $|x|\lesssim 2^{-j}$. To prove that the spherical maximal function is bounded, it is essential that $\Vert M_j f\Vert_{L^p}\lesssim 2^{-c j}\Vert f\Vert_{L^p}$ for a positive number $c>0$. However, on the weighted space $L^p(|x|^{1-d})$, we cannot expect this exponential decay as we can see at the above. To overcome this difficulty, we decompose $x$ dyadically. Proposition \ref{mj sph} tells us that the main contribution of $M_j$ on $L^p(|x|^{1-d})$ occurs when $|x|\sim 2^{-j}$. This gives us the orthogonality and allows us to prove our main theorem. The proof of Proposition \ref{mj sph} has two parts, $j> k$ and $j\leq k$. In the former case, we use the local smoothing estimate of the wave operator. However, since we are considering the critical weight, $\epsilon$-loss on the order of smoothing is not allowed. When $d\geq 4$, we have the local smoothing estimates without the $\epsilon$-loss (see \cite{HNS}) while we do not have such estimates when $d=2,3$. However, when we consider $L^p$--$L^q$ estimate, we have the critical local smoothing estimate. Thus, we use the $L^p$--$L^q$ local smoothing estimates and H\"older's inequality to recover the $L^p$-estimate. Since $j>k$, the singularity of $|x|^{1-d}$ is harmless. In the latter case $j\leq k$, the blow-up of $|x|^{1-d}$ is no more allowable, so we handle this case using the spherical harmonic expansion. The key idea is that we handled the $t$-integration of the wave operator $\mathcal{W}_t$ using the Plancherel theorem, as a Fourier transform of $|\xi|$, the modulus of the frequency variable.

\subsection*{Notations}
\begin{enumerate}
    \item We denote $A\lesssim B$ when there is an implicit constant $C>0$ which is independent of $A$ and $B$. In the context, $C$ may depend on $p, d, N$. Sometimes, we explicitly denote the implicit constant $C_{a,b}$ which depends on $a,b$.
    \item We abuse several notations for the $L^p$-spaces. $L^p(w)$ denotes the $L^p$-space on a suitable domain with the weight $w$. The domain will depend on the context, but it will be clear. Also, we denote $L^p_{x,t}(\R^d\times I)$ by the $L^p$-space with variable $(x,t)\in\R^d\times I$. Finally, we also denote $L^p_{x}(w; D)$ by the $L^p$-space with the variable $x$, the domain $D$ and the weight $w$.
\end{enumerate}

\subsection*{Acknowledgement}
The author would like to thank Kalachand Shuin for a careful reading of the first version of the draft and finding some typos.

\section{Preliminaries}
\subsection{Basic properties}\label{section basic prop}
We first review some basic properties which are widely used in various literature. For a rectangle $R$ in $\R^d$, we denote $R^\ast$ by the dual rectangle of $\R^d$ centered at the origin. Then, we can find a smooth function $\varphi_R$ such that $|\varphi_R(x)|\sim 1$ on $R$, rapidly decreasing outside of $R$, and $\widehat{\varphi_R}$ is supported on $R^\ast$. For example, when $R$ is a cube centered at the origin with sidelength $r$, then $\varphi_R$ satisfies
\[ |\varphi_R(x)|\lesssim (1+\frac{|x|}{r})^{-N}. \]
Then, we have the local orthogonality on $R$.
\begin{lem}\label{local ortho}
    For a rectangle $R$ and a family of functions $\{ h_i\}$, we assume that $\{ \supp\widehat{h_i}+R^\ast \}$ is a family of finitely overlapping annulus of the form $\{ \xi : |\xi|\sim 2^i \}$. Then we have
    \[ \int_R \sum_{i}|h_i(x)|^pdx\lesssim \int |\sum_{i}h_i(x)\varphi_R(x)|^pdx \]
    when $p\geq 2$.
\end{lem}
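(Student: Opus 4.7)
The plan is to combine three standard ingredients: the nesting $\ell^p\hookrightarrow\ell^2$ valid for $p\ge 2$, the pointwise comparison $|\varphi_R|\sim 1$ on $R$, and the $L^p$ Littlewood--Paley square function inequality. First I would observe that since $p\ge 2$, for any nonnegative sequence $\{a_i\}$ one has $\sum_i a_i^p\le (\sum_i a_i^2)^{p/2}$. Applying this with $a_i=|h_i(x)|$ and integrating over $R$, and then using $|\varphi_R(x)|\sim 1$ on $R$, gives
\[
\int_R \sum_i |h_i(x)|^p\,dx \lesssim \int_R \Bigl(\sum_i |h_i(x)\varphi_R(x)|^2\Bigr)^{p/2}dx \le \int_{\R^d}\Bigl(\sum_i |h_i(x)\varphi_R(x)|^2\Bigr)^{p/2}dx.
\]

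The second step is to apply Littlewood--Paley theory to $g_i:=h_i\varphi_R$. By construction $\widehat{g_i}=\widehat{h_i}\ast\widehat{\varphi_R}$ is supported in $\supp\widehat{h_i}+R^\ast$, which by hypothesis is a finitely overlapping family of annuli $\{\xi:|\xi|\sim 2^i\}$. Choose smooth cutoffs $\tilde\psi_i$ adapted to slightly thicker annuli so that $\tilde\psi_i\equiv 1$ on $\supp\widehat{g_i}$, and let $\tilde\Delta_i$ be the corresponding Fourier multiplier. Then $g_i=\tilde\Delta_i G$ where $G=\sum_i g_i$, and by the standard Littlewood--Paley square function inequality (valid because $1<p<\infty$ and the frequency supports are finitely overlapping annuli),
\[
\Bigl\|\Bigl(\sum_i |g_i|^2\Bigr)^{1/2}\Bigr\|_{L^p}\lesssim \|G\|_{L^p}=\Bigl\|\sum_i h_i\varphi_R\Bigr\|_{L^p}.
\]
Raising to the $p$-th power and chaining with the bound from the first step yields the conclusion.

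The only mildly subtle point, and hence the main obstacle, is that the $g_i$ are not literally standard Littlewood--Paley pieces of $G$: their frequency supports are only finitely overlapping, not disjoint, and the precise annuli may differ from the canonical dyadic ones. This is resolved by the cutoff $\tilde\psi_i$ described above, together with absorbing the finite overlap constant into the implicit constant. Everything else is quoting the classical Littlewood--Paley theorem. One small bookkeeping item is that $|\varphi_R|\sim 1$ is only asserted on $R$ in the review of basic properties, so the first inequality is a genuine lower bound $|\varphi_R(x)|\gtrsim 1$ for $x\in R$, which is what is used to pass from $|h_i|$ to $|h_i\varphi_R|$ inside the integral over $R$; this is exactly the local character of the ``local orthogonality'' statement.
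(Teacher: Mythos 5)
Your overall route -- the pointwise embedding $\ell^2\hookrightarrow\ell^p$ for $p\ge 2$, the lower bound $|\varphi_R|\gtrsim 1$ on $R$, and then Littlewood--Paley theory applied to $g_i=h_i\varphi_R$ -- is exactly the argument the paper has in mind (its ``proof'' is the single sentence that the lemma follows from $|\varphi_R|\sim 1$ and Littlewood--Paley theory). However, there is a genuine gap in your key step. The identity $g_i=\tilde\Delta_i G$ with $G=\sum_{i'}g_{i'}$ is false in this setting: the thickened supports $\supp\widehat{g_{i'}}\subset\supp\widehat{h_{i'}}+R^\ast$ are only finitely overlapping, so $\tilde\Delta_i G=g_i+\sum_{i'\neq i}\tilde\Delta_i g_{i'}$ contains $O(1)$ nonzero neighboring terms, and the $g_i$ are not recovered from $G$ by Fourier multipliers; ``absorbing the finite overlap constant'' does not repair this, because the inequality you are invoking, $\bigl\|(\sum_i|g_i|^2)^{1/2}\bigr\|_{L^p}\lesssim\bigl\|\sum_i g_i\bigr\|_{L^p}$, is actually false at this level of generality. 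For instance, take $g_2=-g_1$ with $\widehat{g_1}$ supported in the overlap of two adjacent thickened dyadic annuli: the right-hand side vanishes while the left does not. (The same example shows that the lemma must be read as it is used in the paper, namely with $h_i=\mathcal{P}_i f$ the Littlewood--Paley pieces of a single function, which rules out such artificial cancellation; a reverse square-function bound cannot follow from the bounded-overlap hypothesis alone.)

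To close the gap one needs an extra ingredient that exploits this structure. For example, with $h_i=\mathcal{P}_i f$ one can split the indices into $O(1)$ arithmetic progressions so that within each subfamily the thickened annuli are pairwise disjoint; then $g_i=\tilde\Delta_i G_r$ does hold exactly for $G_r=\sum_{i\equiv r}g_i$ and your square-function step applies to each subfamily, but one must additionally show $\|G_r\|_{L^p}\lesssim\|G\|_{L^p}$, i.e.\ a weighted bound for the multiplier $\sum_{i\equiv r}\chi_i(|\xi|)$ against the weight $|\varphi_R|^p$ (available because all frequencies involved are $\gtrsim$ the reciprocal sidelength of $R$, so the kernels decay at scales compatible with the weight). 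Alternatively, one can choose $\varphi_R$ with a pointwise lower bound of the form $(1+r^{-1}\mathrm{dist}(x,R))^{-N}$ and dominate $|h_i(x)|$ for $x\in R$ directly by a kernel average of $|G|$, then apply the one-sided vector-valued square-function estimate, which holds for arbitrary $G$. As written, the middle inequality of your chain is the step that would fail, so the proof is incomplete without such an addition.
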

This lemma is straightforward from $|\varphi_R(x)|\sim 1$ and the Littlewood-Paley theory. The following lemma tells us that we may consider the averaging operator to control the maximal function.
\begin{lem}[\cite{L}, see also \cite{LL}]\label{sobo}
    Let $1\leq p\leq \infty$ and $F : \R^d\times [1,2]\to \R$ be a smooth function. Then, for any $\lambda>0$ and $x\in\R^d$, we have
    \[ \sup_{t\in[1,2]}|F(x,t)|\lesssim \lambda^{\frac{1}{p}}\Vert F(x,\cdot )\Vert_{L^p([1,2])}+\lambda^{-\frac{1}{p'}}\Vert \partial_t F(x,\cdot)\Vert_{L^p([1,2])}. \]
\end{lem}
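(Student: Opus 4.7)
The plan is to reduce the bound to a one-dimensional Gagliardo--Nirenberg-type interpolation on the interval $[1,2]$, which follows from the fundamental theorem of calculus combined with H\"older's inequality. With $x\in\R^d$ fixed and $g(t):=F(x,t)$, the claim becomes
\[ \sup_{t\in[1,2]}|g(t)|\lesssim \lambda^{1/p}\|g\|_{L^p([1,2])}+\lambda^{-1/p'}\|g'\|_{L^p([1,2])}, \]
a standard Sobolev-type trade-off between a function and its derivative on a finite interval.

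The main step is a localized reproducing formula. Fix $t_0\in[1,2]$ and a subinterval $I\subset[1,2]$ of length $\ell\in(0,1]$ containing $t_0$. For each $s\in I$ the identity
\[ g(t_0)=g(s)+\int_s^{t_0} g'(r)\,dr \]
holds. Averaging over $s\in I$ and using the triangle inequality gives
\[ |g(t_0)|\le \frac{1}{\ell}\int_I |g(s)|\,ds + \int_I |g'(r)|\,dr, \]
and applying H\"older on $I$ to each integral bounds the right-hand side by
\[ \ell^{-1/p}\|g\|_{L^p(I)} + \ell^{1/p'}\|g'\|_{L^p(I)} \le \ell^{-1/p}\|g\|_{L^p([1,2])} + \ell^{1/p'}\|g'\|_{L^p([1,2])}. \]

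Finally I would optimize in $\ell$: choosing $\ell=\min(\lambda^{-1},1)$ yields $\ell^{-1/p}\lesssim \lambda^{1/p}$ and $\ell^{1/p'}\lesssim \lambda^{-1/p'}$, and taking the supremum over $t_0\in[1,2]$ produces the inequality. The proof is essentially routine; the only mild bookkeeping is respecting the restriction $\ell\le |[1,2]|=1$ in the optimization, which is harmlessly absorbed into the implicit constant.
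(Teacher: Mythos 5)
The paper does not actually prove this lemma---it is imported from \cite{L} (see also \cite{LL})---so there is no in-paper argument to compare with; your argument is the standard one and is essentially what those references do. The usual write-up starts from $|F(x,t_0)|^p\le |F(x,s)|^p+p\int_1^2|F|^{p-1}|\partial_tF|\,dt$, averages in $s$ and applies H\"older and Young's inequality with the parameter $\lambda$, whereas you average the identity $g(t_0)=g(s)+\int_s^{t_0}g'$ over a subinterval of length $\ell$ and then optimize $\ell$; these are the same fundamental-theorem-of-calculus-plus-H\"older idea, and your core estimate $|g(t_0)|\le \ell^{-1/p}\|g\|_{L^p(I)}+\ell^{1/p'}\|g'\|_{L^p(I)}$ is correct.

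One caveat about your final optimization: for $0<\lambda<1$ your choice $\ell=\min(\lambda^{-1},1)=1$ gives $\ell^{-1/p}=1$, and the claim $1\lesssim\lambda^{1/p}$ with a constant independent of $\lambda$ is false, so this case is not ``harmlessly absorbed.'' No proof can fix this, because the inequality as literally stated fails for small $\lambda$: taking $F\equiv 1$ makes the left-hand side $1$ while the right-hand side is $C\lambda^{1/p}\to 0$. The lemma should be read (and is only ever applied in the paper, with $\lambda\sim 2^j$, $j\ge 0$) in the regime $\lambda\ge 1$, where your proof is complete; alternatively one restates it with $\|F(x,\cdot)\|_{L^p([1,2])}$ added on the right. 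So the defect is an imprecision in the statement rather than a gap in your argument, but you should flag the restriction $\lambda\ge 1$ explicitly instead of asserting that the constraint $\ell\le 1$ is absorbed into the constant.
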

Next, we introduce the asymptotic expansion of $\widehat{d\sigma}$ using the asymptotic formula of the Bessel function (see, for example, \cite{S}):
\begin{equation}\label{bessel}
  \textstyle   \widehat{d\sigma}(\xi)=\sum_{\pm, \,0\leq k\leq N}\,C_j^{\pm}|\xi|^{-\frac{d-1}{2}-k}e^{\pm i|\xi|}+E_N(|\xi|),\quad |\xi|\geq 1,
\end{equation}
where $E_N$ is a smooth function satisfying $|({d}/{dr})^l E_N(r)|\leq Cr^{-l-\frac{N+1}{4}},$ $0\leq l\leq (N+1)/{4}$ for $r\geq 1$ and a constant $C>0$. By the Fourier inversion, we have
\[ A_t\mathcal{P}_jf(x)=\int e^{ix\cdot\xi}\chi_j(|\xi|)\widehat{f}(\xi)\widehat{d\sigma}(t\xi)d\xi. \]
By fixing sufficiently large $N>0$, the contribution from $E_N$ is very small. Also, from \eqref{bessel}, it suffices to handle the contribution from $|\xi|^{-(d-1)/2}e^{i|\xi|}$ since the others are similarly, and easily handled.

\subsection{Local smoothing estimates for the wave operator}\label{local smoothing section}
The local smoothing estimate is one of the common tool in the study of maximal function. The wave operator defined by
\[ \mathcal{W}_tf(x)=\int e^{i(x\cdot\xi+t|\xi|)}\widehat{f}(\xi)d\xi \]
is deeply related to the spherical maximal function. As we have seen in Section \ref{section basic prop}, the main contribution of $A_t\mathcal{P}_j$ comes from $\mathcal{W}_t[|D|^{-1/2}\mathcal{P}_jf]$. We denote
\[ {\mathbb A}_\lambda=   \{  \eta\in \R^2:  2^{-1}\lambda \le   |\eta|\le 2\lambda\} \]
for $\lambda>0$. The sharp $L^p$-estimates of $\mathcal{W}$ is obtained by Guth, Wang, and Zhang \cite{GWZ}. By interpolation with the trivial $L^1$-$L^{\infty}$ estimate, we get the following sharp $L^p$-$L^q$ estimates with $\epsilon$-loss.

\begin{prop}[\cite{GWZ}, see also \cite{SchS}, \cite{LL}]\label{prop:locals}
Let  $2\le p\leq q$, $1/{p}+3/{q}\le  1$, and $\lambda \ge 1$. Then,  the estimate   
\begin{equation}\label{locals-est}
\big\Vert \mathcal{W}_t g \big\Vert_{L^q_{x,t}(\R^2\times \mathbb [1,2]])}\le C_{\epsilon,p,q}
\lambda^{( \frac{1}{2}+\frac{1}{p}-\frac{3}{q} )+\epsilon }\Vert g\Vert_{L^p} 
\end{equation}
holds for a constant $C_{\epsilon, p, q}>0$ for any $\epsilon>0$ whenever $\supp \widehat g\subset \bA_\lambda$.  
\end{prop}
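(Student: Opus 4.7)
My plan is to deduce Proposition \ref{prop:locals} from the sharp $L^4$ local smoothing estimate of Guth--Wang--Zhang by Riesz--Thorin interpolation against two elementary pointwise bounds on the convolution kernel of $\mathcal{W}_t$.

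Write $K_\lambda(x,t) = \int e^{i(x\cdot\xi + t|\xi|)} \chi(|\xi|/\lambda)\,d\xi$ for the convolution kernel of $\mathcal{W}_t$ after localizing frequency to $\bA_\lambda$. I would use three endpoint estimates, all assuming $\supp \hat g \subset \bA_\lambda$. First, the deep input from \cite{GWZ}: the sharp $L^4$ local smoothing estimate $\|\mathcal{W}_t g\|_{L^4_{x,t}([1,2])} \le C_\epsilon \lambda^\epsilon \|g\|_{L^4}$. Second, the $L^\infty\to L^\infty$ bound $\|\mathcal{W}_t g\|_{L^\infty_{x,t}} \le C\lambda^{1/2}\|g\|_{L^\infty}$, which follows from $\|K_\lambda(\cdot,t)\|_{L^1_x} \lesssim \lambda^{1/2}$. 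Third, the $L^1 \to L^\infty$ bound $\|\mathcal{W}_t g\|_{L^\infty_{x,t}} \le C\lambda^{3/2}\|g\|_{L^1}$, from $\|K_\lambda\|_{L^\infty_{x,t}} \lesssim \lambda^{3/2}$. Both kernel bounds come from stationary phase in the radial variable, equivalently from the Bessel expansion \eqref{bessel} (which encodes exactly the decay $|\widehat{d\sigma}(\xi)|\lesssim |\xi|^{-1/2}$). At the three triples $((p,q),\sigma) \in \{((4,4),0),\ ((\infty,\infty),1/2),\ ((1,\infty),3/2)\}$ the exponent $\sigma$ agrees with the claimed value $1/2 + 1/p - 3/q$.

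To finish, I would iterate Riesz--Thorin. Interpolating the GWZ endpoint with the $L^\infty\to L^\infty$ bound produces the diagonal estimate $\|\mathcal{W}_t g\|_{L^r_{x,t}} \le C_\epsilon \lambda^{1/2 - 2/r + \epsilon}\|g\|_{L^r}$ for every $r \ge 4$; interpolating each of those with the $L^1\to L^\infty$ bound sweeps out a family of segments emanating from $(1,0)$ that together cover the triangle in the $(1/p, 1/q)$ plane with vertices $(1/4,1/4)$, $(0,0)$, $(1,0)$. Since both the target exponent $1/2 + 1/p - 3/q$ and the Riesz--Thorin interpolated exponent are affine functions of $(1/p,1/q)$ that agree at the three vertices, they agree on the whole triangle. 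This triangle coincides exactly with the region $\{(1/p, 1/q) : p \le q,\ 1/p + 3/q \le 1\}$, and intersecting with $\{p \ge 2\}$ recovers the hypothesis of the proposition.

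The serious work is entirely in the citation of \cite{GWZ}; the rest is routine interpolation. The only small subtlety worth flagging is that $\|K_\lambda\|_{L^\infty} \lesssim \lambda^{3/2}$, rather than the naive $\lambda^2$ coming from $|\bA_\lambda|$, does require one to exploit the oscillation of the phase $t|\xi|$; but this is standard and is precisely the mechanism already underlying the $|\xi|^{-(d-1)/2}$ decay of $\widehat{d\sigma}$ used throughout the paper.
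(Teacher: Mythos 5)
Your argument is correct and is essentially the proof the paper has in mind: the paper cites \cite{GWZ} for the sharp $L^p$ local smoothing estimates on the diagonal (which, as you spell out, come from the $L^4$ case by interpolation with the trivial $L^\infty\to L^\infty$ kernel bound $\|K_\lambda(\cdot,t)\|_{L^1_x}\lesssim\lambda^{1/2}$) and then says ``interpolation with the trivial $L^1$--$L^\infty$ estimate,'' which is exactly your second Riesz--Thorin step using $\|K_\lambda\|_{L^\infty}\lesssim\lambda^{3/2}$; the exponent bookkeeping at the three vertices $(1/4,1/4)$, $(0,0)$, $(1,0)$ and the affine-function argument match the proposition's region after intersecting with $p\ge 2$.
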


However, our problem concerns the weight of the critical order, so the $\epsilon$-loss is not allowable. when $d=2$, S. Lee \cite{L} obtained the critical $L^p$-$L^q$ local smoothing estimate for some $p<q$ using Wolff's sharp bilinear cone restriction estimate (see \cite{W2}). Using the same method, we can obtain the critical $L^p$-$L^q$ local smoothing estimate for some $p<q$ when $d\geq 3$ (without $\epsilon$-loss). However, for our purpose, when $d\geq 3$, estimates from the Strichartz estimate which is also mentioned in \cite{L} is enough (see (1.10) in \cite{L}).

\begin{prop}[\cite{L}]\label{critical pq locals}
    Let $2\leq p\leq q$, $\frac{1}{p}+\frac{d+1}{(d-1)q}\leq 1$, and $\lambda\geq 1$. Also, we assume $q> 14/3$ when $d=2$ and $q\geq \frac{2(d+1)}{d-1}$ when $d\geq 3$. Then, for a constant $C_{p,q,d}>0$, we have the following estimates for any $g$ such that $\supp\widehat{g}\subset \mathbb{A}_{\lambda}$:
    \begin{equation}\label{locals-est critical}
        \Vert \mathcal{W}_tg\Vert_{L^q_{x,t}(\R^d\times [1,2])}\le C_{p,q,d}\lambda^{\frac{d-1}{2}+\frac{1}{p}-\frac{d+1}{q}}\Vert g\Vert_{L^p}.
    \end{equation}
\end{prop}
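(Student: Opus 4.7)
The plan is to split into the cases $d\geq 3$ and $d=2$ and in each case reduce (\ref{locals-est critical}) to a sharp no-$\epsilon$-loss input already available in the literature.

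For $d\geq 3$, the key observation is that the lower bound $q\geq q_0:=2(d+1)/(d-1)$ coincides precisely with the Keel-Tao diagonal wave-admissibility condition $(d+1)/q\leq (d-1)/2$. The sharp Strichartz estimate for the half-wave propagator therefore yields, for every $q\geq q_0$ and every $g$ with $\supp\widehat g\subset\mathbb{A}_\lambda$,
\[
\|\mathcal{W}_t g\|_{L^q_{t,x}(\R^d\times[1,2])}\lesssim \|g\|_{\dot H^{d/2-(d+1)/q}}\sim \lambda^{d/2-(d+1)/q}\|g\|_{L^2},
\]
which matches the target exponent $(d-1)/2+1/p-(d+1)/q$ at $p=2$. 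To move off the line $p=2$, I would interpolate by Riesz-Thorin against the trivial $L^\infty$-$L^\infty$ endpoint
\[
\|\mathcal{W}_t g\|_{L^\infty_{t,x}}\lesssim \lambda^{(d-1)/2}\|g\|_{L^\infty},
\]
which follows from the $L^1$-bound $\|K_t\|_{L^1(\R^d)}\lesssim \lambda^{(d-1)/2}$ for the frequency-localized wave kernel $K_t(x)=\int e^{i(x\cdot\xi+t|\xi|)}\chi_\lambda(\xi)\,d\xi$, obtained by standard stationary phase (the kernel is concentrated in a $\lambda^{-1}$-neighborhood of the light cone $|x|=t$). Both endpoint exponents are scaling-sharp and agree with the target, so the interpolation gives (\ref{locals-est critical}) throughout the sub-region it covers. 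The slice of the claimed range lying between the Strichartz line and the diagonal $p=q$ is reached by an additional Bernstein-in-frequency step on $\mathcal{W}_t g$ in the spatial variable, which introduces no loss because $\mathcal{W}_t g$ is itself frequency-localized to $\mathbb{A}_\lambda$.

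For $d=2$ the Keel-Tao diagonal admissible index is $q_0=6$, strictly above the target threshold $14/3$, so Strichartz alone does not suffice. The proposition in this case is exactly S.\,Lee's critical $L^p$-$L^q$ local smoothing estimate from \cite{L}, whose proof rests on Wolff's sharp bilinear cone restriction estimate in $\R^3$ \cite{W2}. The quantitative threshold $q>14/3$ is inherited from the Wolff bilinear exponent, and the passage from the bilinear to the linear estimate follows the standard Tao-Vargas-Vega scheme.

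The main obstacle is the complete suppression of any $\epsilon$-loss: because the weight $|x|^{1-d}$ is critical, even a factor of $\lambda^\epsilon$ would destroy the dyadic summation driving Proposition~\ref{mj sph}. For $d\geq 3$ this is clean, since the classical endpoint Strichartz is already sharp. For $d=2$, however, the $\epsilon$-loss result of Guth-Wang-Zhang recalled in Proposition~\ref{prop:locals} is insufficient, and one is forced into Wolff's much harder bilinear theorem; this is precisely where the restriction $q>14/3$ enters the statement.
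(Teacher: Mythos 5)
The paper does not actually prove this proposition: it is cited wholesale from Lee \cite{L}, with the $d=2$ case resting on Wolff's bilinear cone restriction and the $d\geq 3$ case attributed to the Strichartz input at (1.10) of \cite{L}. Your handling of $d=2$ is therefore the same citation as the paper's and is fine, so I will judge the $d\geq 3$ argument on its own terms.

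The Strichartz estimate at $p=2$ and the kernel $L^\infty$--$L^\infty$ bound $\Vert\mathcal{W}_t g\Vert_{L^\infty}\lesssim\lambda^{(d-1)/2}\Vert g\Vert_{L^\infty}$ are both correct, and Riesz--Thorin between them covers exactly the triangle in the $(1/p,1/q)$-plane with vertices $(1/2,1/q_0)$, $(1/2,0)$, $(0,0)$, where $q_0=2(d+1)/(d-1)$. The claimed region for $d\geq 3$ is the larger quadrilateral $\{1/q\leq\min(1/p,1/q_0),\ 1/p\leq1/2\}$; it contains a second triangle with vertices $(1/2,1/q_0)$, $(0,0)$, $(1/q_0,1/q_0)$ that your interpolation does not reach, and the Bernstein step you invoke cannot close this gap. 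Since $\widehat{\mathcal{W}_t g}(\cdot,t)$ is supported in $\mathbb{A}_\lambda$, Bernstein only runs from a lower Lebesgue exponent to a higher one at the cost of a positive power of $\lambda$, i.e.\ it can only produce \emph{weaker} estimates at larger $q$. The missing triangle sits at \emph{smaller} $q$ for each fixed $1/p$, where the target exponent $\frac{d-1}{2}+\frac{1}{p}-\frac{d+1}{q}$ is strictly smaller than on the interpolated segment, so you are being asked for a \emph{stronger} estimate there; no Bernstein move goes in that direction. In fact the corner $p=q=q_0$ is precisely the sharp diagonal local smoothing estimate at the Strichartz exponent, which is not known without $\epsilon$-loss in low dimensions and is not covered by the Heo--Nazarov--Seeger range either, so that part of the claim cannot be reached by soft interpolation at all. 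The saving grace is that the paper only ever applies the proposition at $p=2$ with some $q$ slightly above $q_0$ (to ensure $d/q>1/p$ in the $j>k$ case of Proposition~\ref{mj sph}), and that point your Strichartz argument does correctly supply.
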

When $d\geq 4$, Heo, Nazarov, Seeger \cite{HNS} obtained the critical local smoothing estimates of $\mathcal{W}$ for $p>2(d-1)/(d-3)$.
\begin{prop}\label{critical pp locals}
    Let $d\geq 4$ and $p>2(d-1)/(d-3)$. Then, we have the following estimates
    \[ \Vert \mathcal{W}_tg \Vert_{L^p_{x,t}(\R^d\times [1,2])}\leq C_{p,d} \lambda^{\frac{d-1}{2}-\frac{d}{p}}\Vert g\Vert_{L^p} \]
    for some constant $C_{p,d}>0$ whenever $\supp\widehat{g}\subset \mathbb{A}_{\lambda}$.
\end{prop}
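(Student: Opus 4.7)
The plan is to view this as the frequency-localized local smoothing estimate at the critical regularity exponent $(d-1)/2-d/p$, and to reduce it by parabolic rescaling on the cone to a uniform bound at unit frequency scale; that unit-scale bound is then exactly the result of \cite{HNS}. Concretely, the phase $x\cdot\xi+t|\xi|$ identifies $\mathcal{W}_t g$ with a Fourier extension operator adapted to the light cone $\{\tau=|\xi|\}\subset\R^{d+1}$, and the condition $\supp\widehat{g}\subset\mathbb{A}_\lambda$ localizes the operator to a conical annulus at scale $\lambda$. Setting $\tilde{g}(y)=g(y/\lambda)$ gives $\supp\widehat{\tilde{g}}\subset\mathbb{A}_1$, and a direct change of variables relates $\mathcal{W}_t g$ on $\R^d\times[1,2]$ to $\mathcal{W}_{t'}\tilde{g}$ on $\R^d\times[\lambda,2\lambda]$. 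Tracking Jacobians and normalizations, the sharp gain $\lambda^{(d-1)/2-d/p}$ emerges as the exponent dictated purely by scaling.

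For the unit-scale bound, I would reproduce the Heo--Nazarov--Seeger strategy. Decompose the relevant conical cap into a Whitney family of plates of angular width $2^{-k}$ (with the induced radial and axial extents), write $\mathcal{W}_t g=\sum_\nu T_\nu g$ accordingly, and reassemble on $L^p$ via a sharp $L^p$-orthogonality / square-function estimate for the cone combined with the transversality of cone generators. Each single $T_\nu g$ is Fourier-localized to a plate of controlled dimensions, so Plancherel on slices plus the plate volume gives a sharp $L^p$ estimate; the work is in the reassembly. The hypotheses $d\geq 4$ and $p>2(d-1)/(d-3)$ enter precisely here: the former guarantees that the $(d-2)$-dimensional base of the cone is curved enough to support the requisite restriction-theoretic input (a Stein--Tomas / Rubio de Francia type inequality), and the latter is exactly the range in which the cone square-function constant is $O(1)$, with no logarithmic loss.

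The main obstacle, and what distinguishes this from Proposition \ref{prop:locals}, is precisely the removal of the $\varepsilon$-loss: each ingredient in the reassembly --- the Plancherel-based bound on a single plate, the $L^p$-orthogonality across plates, and the cap-counting --- must be sharp simultaneously, since any one of them carrying an $\varepsilon$ in the exponent or a logarithm in the constant would defeat the whole purpose. The standard interpolation route that yields Proposition \ref{prop:locals} cannot be used for this reason. Once the sharp cone square-function estimate of \cite{HNS} is invoked at unit scale, summing the plate contributions delivers the unit-scale estimate, and the rescaling above produces the claimed factor $\lambda^{(d-1)/2-d/p}$.
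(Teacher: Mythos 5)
The paper does not actually prove this proposition: it is taken verbatim from Heo--Nazarov--Seeger \cite{HNS}, and the intended justification is the citation alone. Your proposal in the end defers the entire analytic content to ``reproducing the Heo--Nazarov--Seeger strategy,'' so it is at best the same citation --- but the scaffolding you build around it contains genuine errors. First, the rescaling step is vacuous: substituting $\xi=\lambda\eta$ converts the estimate for frequencies $|\xi|\sim\lambda$ and $t\in[1,2]$ into an equivalent estimate at unit frequency over the long time interval $[\lambda,2\lambda]$; it does not produce a ``unit-scale bound'' on $[1,2]$, and nothing is reduced. Second, the exponent $\frac{d-1}{2}-\frac{d}{p}$ is emphatically not ``dictated purely by scaling.'' The fixed-time (Miyachi--Peral) bound already gives $\lambda^{(d-1)(\frac12-\frac1p)}$, and since $\frac{d-1}{2}-\frac dp=(d-1)(\frac12-\frac1p)-\frac1p$, the proposition encodes the full local smoothing gain of $1/p$ derivatives from the $t$-average; if scaling forced this exponent, the local smoothing problem would be trivial. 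This gain is precisely the hard content of \cite{HNS}.

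Third, your sketch of the unit-scale argument misrepresents what \cite{HNS} prove. There is no ``sharp cone square-function estimate'' in that paper to invoke, and loss-free reverse square function estimates for the cone in the range $p>2(d-1)/(d-3)$ are not known (even the Guth--Wang--Zhang estimate used for Proposition \ref{prop:locals} carries an $\epsilon$-loss, which is exactly why the paper cannot use it here). Heo--Nazarov--Seeger argue quite differently: they decompose the propagator into translated spherical kernels, prove a key $L^2$ inequality for sums $\sum_{(y,r)}c(y,r)\,\sigma_r(\cdot-y)$ over $1$-separated families of centers and radii --- this is where $d\geq4$ enters, through the smallness of intersections of thin annuli, equivalently the decay $|\widehat{d\sigma}(\xi)|\lesssim|\xi|^{-(d-1)/2}$ --- and then pass to $L^p$ for $p>2(d-1)/(d-3)$ via a density decomposition and interpolation. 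So as a self-contained proof your proposal has a gap at its core (the crucial inequality is neither proved nor obtainable by the plate/square-function route you describe without losses), and as a citation it should simply quote the local smoothing corollary of \cite{HNS}, which is already stated in exactly the form of the proposition, with no rescaling needed.
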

We finally remark that allowing the $\epsilon$-loss, the sharp local smoothing estimates beyond the bilinear method is obtained by Beltran and Saari \cite{BS}.

\subsection{Spherical harmonics}
In this section we review some basic properties of spherical harmonics. We refer Chapter 4 of \cite{SW} for details. Let $\mathcal{H}_k$ be the space of spherical harmonics of degree $k$. This means that any element of $\mathcal{H}_k$ is a restriction of a degree $k$ harmonic polynomial on $\mathbb{S}^{d-1}$. Then, it is well known that the collection of all finite linear combinations of elements of $\bigcup_{k=0}^{\infty}\mathcal{H}_k$ is dense in $L^2(\mathbb{S}^{d-1})$. We fix an orthonormal basis of $\mathcal{H}_k$ by
\[ \mathcal{B}_k= \{ Y_i^k : 1\leq i\leq d_k \} \]
where $d_k$ is the dimension of $\mathcal{H}_k$. Then, $\bigcup_{k=0}^{\infty}\mathcal{B}_k$ forms an orthonormal basis of $L^2(\mathbb{S}^{d-1})$. Also, for any function $f\in L^2(\R^d)$, we may find the unique expansion
\begin{equation}\label{sp har expansion}
    f(x)=\sum_{k=0}^{\infty}\sum_{i=1}^{d_k}a_i^k(r)Y_i^k(\theta)
\end{equation}
where $x=r\theta$ for $r>0$, $\theta\in\mathbb{S}^{d-1}$. The following is an interesting property of spherical harmonics which is crucial in our argument.
\begin{lem}[Corollary 2.9, Lemma 2.18, \cite{SW}]\label{sp har lemma}
    Suppose $f(x)=f_0(r)Y(\theta)$ where $x=r\theta$, $r>0$, $\theta\in\mathbb{S}^{d-1}$, and $Y\in\mathcal{H}_k$ for $k\geq 0$. Then, we have
    \[ \widehat{f}(\xi)=\left\{ \int_0^{\infty}f_0(r)\phi_k(r\rho)r^{d-1}dr \right\}Y(\omega) \]
    for a function $\phi_k$ where $\xi=\rho\omega$ for $\rho>0$ and $\omega\in\mathbb{S}^{d-1}$. Also, for a constant $C(d)>0$, we have $|\phi_k(s)|\leq C(d)$ for any $k\geq 0$, $s\in\R$.
\end{lem}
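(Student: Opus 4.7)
The plan is to reduce the Fourier transform integral to a spherical integral and exploit rotational symmetry. First I would convert to polar coordinates $x=r\theta$, $\xi=\rho\omega$ to write
\[ \widehat{f}(\xi) = \int_0^\infty f_0(r)\, r^{d-1} \left( \int_{\mathbb{S}^{d-1}} e^{-ir\rho\,\theta\cdot\omega} Y(\theta)\,d\sigma(\theta) \right) dr, \]
so the task reduces to showing that, for each real $s$, the spherical operator $T_s Y(\omega) := \int_{\mathbb{S}^{d-1}} e^{-is\theta\cdot\omega} Y(\theta)\,d\sigma(\theta)$ acts on $\mathcal{H}_k$ as multiplication by a universal scalar $\phi_k(s)$. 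I would obtain this from the Funk--Hecke formula applied to the zonal kernel $F(t) = e^{-ist}$: since this kernel depends only on $\theta\cdot\omega$, the operator $T_s$ commutes with the diagonal $O(d)$-action on $L^2(\mathbb{S}^{d-1})$, and the decomposition of $L^2(\mathbb{S}^{d-1})$ into irreducible $O(d)$-submodules $\mathcal{H}_k$ together with Schur's lemma forces $T_s|_{\mathcal{H}_k}$ to be scalar. The resulting $\phi_k(s)$ then depends only on $k$, $d$, and $s$.

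Once the scalar action is established, the uniform bound $|\phi_k(s)| \leq C(d)$ falls out cleanly from operator-norm considerations. Since the kernel $e^{-is\theta\cdot\omega}$ has modulus one on $\mathbb{S}^{d-1}\times\mathbb{S}^{d-1}$, Schur's test applied with the constant weight $1$ yields $\|T_s\|_{L^2(\mathbb{S}^{d-1})\to L^2(\mathbb{S}^{d-1})} \leq |\mathbb{S}^{d-1}|$. Since $\mathcal{H}_k$ is an invariant subspace on which $T_s$ acts as the scalar $\phi_k(s)$, this immediately gives $|\phi_k(s)| \leq |\mathbb{S}^{d-1}|$ uniformly in $k$ and $s$, as required.

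The main obstacle I expect is the representation-theoretic input: one must know that $\mathcal{H}_k$ is irreducible as an $O(d)$-module for Schur's lemma to deliver the scalar action. This is classical but not entirely trivial; I would either cite it from Stein--Weiss directly or verify it by showing that the $O(d)$-orbit of the zonal harmonic $Z^{(k)}_\omega$ already spans $\mathcal{H}_k$, which in turn follows from the reproducing-kernel identity $Y(\omega) = \langle Y, Z^{(k)}_\omega \rangle_{L^2(\mathbb{S}^{d-1})}$ for all $Y \in \mathcal{H}_k$. An alternative route that bypasses representation theory is to expand the factor $e^{-ist}$ in its Taylor series inside the Funk--Hecke integral and recognize the resulting sum as a multiple of $s^{-(d-2)/2} J_{k+(d-2)/2}(s)$, which simultaneously identifies $\phi_k$ explicitly (Bochner's relation) and makes the uniform bound visible through standard Bessel function estimates on the three regimes $s \leq 1$, $s \sim k$, and $s \gg k$.
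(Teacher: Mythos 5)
Your proof is correct in outline, but note that the paper itself gives no proof of this lemma: it is quoted from Stein--Weiss \cite{SW}, where the statement is extracted from the Hecke--Bochner argument, i.e.\ an explicit formula $\phi_k(s)=c_d\, i^{-k}s^{-(d-2)/2}J_{k+(d-2)/2}(s)$ (up to the normalization of the Fourier transform), with the uniform bound then read off from Bessel function estimates --- which is precisely the alternative route you sketch at the end. Your primary route (Funk--Hecke scalar action plus an operator-norm bound) is genuinely different and is arguably cleaner for the part of the lemma the paper actually uses, namely the bound $|\phi_k(s)|\le C(d)$ uniform in $k$: once $T_sY=\phi_k(s)Y$ on $\mathcal{H}_k$ is known, Cauchy--Schwarz alone already gives $\|T_s\|_{L^2(\mathbb{S}^{d-1})\to L^2(\mathbb{S}^{d-1})}\le |\mathbb{S}^{d-1}|$ (you do not even need Schur's test), hence $|\phi_k(s)|\le C(d)$ with no Bessel asymptotics and no case analysis in $s$ versus $k$. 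If you carry out the Schur-lemma step, tighten two points: work with the complexified spaces (the kernel $e^{-is\theta\cdot\omega}$ is complex-valued, and real Schur only identifies the commutant as a division algebra), and justify that $T_s$ maps $\mathcal{H}_k$ into itself --- irreducibility of $\mathcal{H}_k$ alone only makes the $\mathcal{H}_k$-component of $T_s|_{\mathcal{H}_k}$ scalar, so you also need that the $\mathcal{H}_j$ are pairwise non-isomorphic $O(d)$-modules (true, but deserving a word when $d=2$, where they all have dimension $2$), or, more simply, that $T_s$ commutes with the spherical Laplacian because its kernel is zonal and therefore preserves each eigenspace $\mathcal{H}_k$. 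Similarly, the fact that the $O(d)$-orbit of a zonal harmonic spans $\mathcal{H}_k$ does not by itself yield irreducibility; the standard argument also uses that the subspace of $\mathcal{H}_k$ fixed by the stabilizer of a pole is one-dimensional. All of these ingredients are classical and available in \cite{SW}, so either citing the Funk--Hecke theorem directly or following the Bochner-relation route gives the shortest complete proof.
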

Indeed, Lemma \ref{sp har lemma} is not stated as above in \cite{SW}, but it is contained in the proof. Also, we note that Lemma \ref{sp har lemma} implies
\begin{equation}\label{harmonic piece fourier transform}
    \int_{\mathbb{S}^{d-1}} Y(\theta)e^{-ir\rho \theta\cdot\omega}d\theta=\phi_k(r\rho)Y(\omega)
\end{equation}
for any $Y\in\mathcal{H}_k$. We may find the expansion for $\widehat{f}$ as follows:
\begin{equation}\label{sp har expansion fourier}
    \widehat{f}(\xi)=\sum_{k=0}^{\infty}\sum_{i=1}^{d_k}b_i^k(\rho)Y_i^k(\omega)
\end{equation}
where $\xi=\rho\omega$ for $\rho>0$, $\omega\in\mathbb{S}^{d-1}$. Then, by the orthogonality, we see that
\[ \Vert f\Vert_{L^2}=(\sum_{k=0}^{\infty}\sum_{i=1}^{d_k}\int_0^{\infty} |b_i^k(\rho)|^2\rho^{d-1}d\rho)^{\frac{1}{2}} \]
holds.

\section{Proof of the main theorem}
In this section, we prove Theorem \ref{main thm} assuming Proposition \ref{mj sph}. For $n\in\Z$, we define $\mathcal{P}_{<n}$ by $(\mathcal{P}_{<n}f)^{\wedge}(\xi)=\sum_{j<n}\chi_j(|\xi|)\widehat{f}(\xi)$. Also, we denote $\chi_{<n}(s)=\sum_{j<n}\chi_j(s)$. Before the proof of the main theorem, we introduce a technical lemma handling Schwartz tails.
\begin{lem}\label{2^k maximal lemma}
        Suppose $p>d/(d-1)$. For $k,j\geq 0$ and any $N>0$,
        \begin{align}\label{2^k maximal}
            & \int_{|x|\sim 2^k}|M_jf(x)|^p|x|^{1-d}dx \\
            & \lesssim 2^{-\beta j}\int_{|x|\sim 2^k}|\mathcal{P}_jf(x)|^p|x|^{1-d}dx+\sum_{m\in\Z}2^{-N(j+k)-(d-1)|m|}\int_{|x|\sim 2^m}|\mathcal{P}_jf(x)|^p|x|^{1-d}dx \nonumber
        \end{align}
        holds for some $\beta=\beta(d,p)>0$.
    \end{lem}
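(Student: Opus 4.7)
The strategy is to combine a physical cutoff to the annulus $|y|\sim 2^k$ with the standard frequency-localized $L^p$-bound $\|M_c g\|_{L^p(\R^d)}\lesssim 2^{-\beta j}\|g\|_{L^p(\R^d)}$, valid whenever $\supp\hat g\subset\{|\xi|\sim 2^j\}$, $p>d/(d-1)$, and for some $\beta=\beta(d,p)>0$. This latter bound is standard and follows by combining Propositions \ref{critical pq locals} and \ref{critical pp locals} with the Bessel asymptotics \eqref{bessel} and the Sobolev-type reduction of Lemma \ref{sobo}; it is the tool behind Stein's spherical maximal theorem.

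The first step is physical localization: for $|x|\sim 2^k$ and $t\in(1,2)$, the sphere $\{y:|x-y|=t\}$ lies in a bounded dyadic neighborhood of $\{|y|\sim 2^k\}$, so I introduce a smooth cutoff $\eta_k$ equal to $1$ on this neighborhood and supported in a slightly larger dyadic region. Then $M_jf(x)=M_c(\eta_k\mathcal{P}_jf)(x)$ for $|x|\sim 2^k$. Next, because $\widehat{\eta_k}$ is supported at scale $\lesssim 2^{-k}$ (or $\lesssim 1$ if $k$ is bounded), the function $\eta_k\mathcal{P}_jf$ is essentially frequency-localized at $|\xi|\sim 2^j$ for $j,k\ge 0$. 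Letting $\widetilde{\mathcal{P}}_j=\sum_{|l-j|\le C}\mathcal{P}_l$ be a slight thickening satisfying $\widetilde{\mathcal{P}}_j\mathcal{P}_j=\mathcal{P}_j$, I split
\[
\eta_k\mathcal{P}_jf=\widetilde{\mathcal{P}}_j(\eta_k\mathcal{P}_jf)+[\eta_k,\widetilde{\mathcal{P}}_j]\mathcal{P}_jf.
\]
The frequency-localized bound applied to the first summand, combined with $|x|^{1-d}\sim 2^{k(1-d)}$ on $|x|\sim 2^k$ and the support properties of $\eta_k$, produces the main term $2^{-\beta j}\int_{|x|\sim 2^k}|\mathcal{P}_jf|^p|x|^{1-d}dx$.

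The commutator $[\eta_k,\widetilde{\mathcal{P}}_j]\mathcal{P}_jf$ is controlled via its Schwartz kernel $K_{\widetilde j}(x-y)(\eta_k(x)-\eta_k(y))$, which inherits Schwartz decay in $j$ from $K_{\widetilde j}$ and arbitrary-order smallness in $k$ from iterated Taylor expansion of $\eta_k$ at its natural scale $2^k$. Combined with the trivial $L^p$-boundedness of $M_c$ on frequency-unlocalized inputs (valid for $p>d/(d-1)$) and the dyadic decomposition $\mathcal{P}_jf=\sum_m\chi_m(|\cdot|)\mathcal{P}_jf$, this yields the error sum $\sum_m 2^{-N(j+k)-(d-1)|m|}\int_{|x|\sim 2^m}|\mathcal{P}_jf|^p|x|^{1-d}dx$. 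The main obstacle is precisely this last step: extracting the sharp Schwartz factor $2^{-N(j+k)}$ from the commutator—a single Lipschitz estimate on $\eta_k$ gives only one power $2^{-k}$, so one must iterate Taylor expansion to arbitrary order—and then organizing the resulting kernel bounds by the spatial annuli of $\mathcal{P}_jf$ so as to reproduce the precise annular form of the error in \eqref{2^k maximal}.
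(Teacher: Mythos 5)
Your overall strategy---physically localize to the annulus $|x|\sim 2^k$, isolate the piece of $\eta_k\mathcal{P}_jf$ that remains at frequency $\sim 2^j$ to obtain the main term via the frequency-localized $M_c$ bound, and treat the residue as a Schwartz tail---is the same as the paper's. The paper's organization is different and arguably simpler: rather than introducing a commutator, it decomposes the cutoff $\tilde\chi_k$ itself into dyadic frequency pieces $\mathcal{Q}_j^1\tilde\chi_k$, $\mathcal{Q}_j^2\tilde\chi_k$, $\mathcal{P}_{j'}\tilde\chi_k$ and obtains pointwise bounds (\eqref{t1 est}--\eqref{t3 est}) on each directly from the Fourier decay of the rescaled bump $\tilde\chi_k$. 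Both routes rest on the same underlying fact: the Fourier transform of a bump at scale $2^k$ has negligible content at frequencies $\gtrsim 2^{j}$ once $j,k\ge 0$.

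There is, however, a genuine gap in your treatment of the commutator. You assert that ``iterated Taylor expansion of $\eta_k$ at its natural scale $2^k$'' produces arbitrary-order smallness $2^{-N(j+k)}$. This is not so: Taylor expanding $\eta_k(x)-\eta_k(y)$ to order $N$ against the kernel $K_{\widetilde j}(x-y)$ yields a sum $\sum_{1\le|\alpha|<N} 2^{-|\alpha|k}\cdot 2^{-|\alpha|j}$ plus a remainder $O(2^{-N(j+k)})$, and the leading $|\alpha|=1$ term is genuinely of size $2^{-(j+k)}$, not smaller. Arbitrary powers come only after observing that each polynomial Taylor term $\partial^{\alpha}\eta_k(x)\,\big[(-z)^{\alpha}K_{\widetilde j}\big]\ast\mathcal{P}_jf(x)$ vanishes identically, because $\widehat{K_{\widetilde j}}\equiv 1$ on $\supp\widehat{\mathcal{P}_jf}$ (this is the content of $\widetilde{\mathcal{P}}_j\mathcal{P}_j=\mathcal{P}_j$), so all its $\xi$-derivatives vanish there. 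Only the order-$N$ remainder survives, which is what gives $2^{-N(j+k)}$. Without this moment-cancellation step the iterated Taylor expansion buys nothing beyond a single power. Once you include it, the rest of your plan---pairing the kernel's spatial support near $|y|\sim 2^k$ with the dyadic decomposition $\mathcal{P}_jf=\sum_m\chi_m\mathcal{P}_jf$ to produce the $2^{-(d-1)|m|}$ factors---is sound and parallels the paper's tail bookkeeping.
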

Heuristically, since $M_c$ is a local maximal operator, the contribution of $M_jf$ on the annulus $\mathbb{A}_{2^k}$ comes from the values of $\mathcal{P}_jf$ on an $O(1)$-neighborhood of the same annulus. However, the restriction of $\mathcal{P}_jf$ on the annulus is no more localized on the Fourier side. Therefore, we are obliged to handle the Schwartz tail which is the second term on the right hand side of \eqref{2^k maximal}.
    
    \begin{proof}[Proof of lemma \ref{2^k maximal lemma}]
         As above, we obtain that
    \[ \int_{|x|\sim 2^k}|M_c[(1-\tilde{\chi}_k)\mathcal{P}_jf](x)|^p|x|^{1-d}dx=0. \]
    Also, from \cite{B} and \cite{S2} (see also \cite{Sch}), we have
    \begin{equation}\label{spherical no weight}
        \int |M_jg(x)|^pdx\lesssim 2^{-\alpha j}\int |\mathcal{P}_jg(x)|^pdx\lesssim 2^{-\alpha j}\int |g(x)|^pdx
    \end{equation}
    for some $\alpha>0$ depending on $d,p$ when $p>d/(d-1)$. We denote $\mathcal{P}_{<j-10}=\mathcal{Q}_j^1$, and $\sum_{j-10\leq j'\leq j+10}\mathcal{P}_{j'}=\mathcal{Q}_j^2$. Then, by the triangle inequality, we have
    \begin{align}\label{tail decomp}
        &\int_{|x|\sim 2^k}|M_jf(x)|^p|x|^{1-d}dx\\
        &\lesssim 2^{k(1-d)}(\sum_{i=1,2}\Vert M_c[\mathcal{Q}_j^i\tilde{\chi}_k\cdot\mathcal{P}_jf]\Vert_{L^p}+\sum_{j'>j+10}\Vert M_c[\mathcal{P}_{j'}\tilde{\chi}_k\cdot\mathcal{P}_jf] \Vert_{L^p})^p.\nonumber
    \end{align}
    Note that $(\mathcal{Q}_j^1\tilde{\chi}_k\cdot\mathcal{P}_jf)^{\wedge}(\xi)\neq 0$ only if $|\xi|\sim 2^j$ while for $j'>j+10$, $(\mathcal{P}_{j'}\tilde{\chi}_k\cdot\mathcal{P}_j f)^{\wedge}(\xi)\neq 0$ only if $|\xi|\sim 2^{j'}$. Thus, by \eqref{spherical no weight} and the boundedness of $M_c$, we obtain that the left hand side of \eqref{tail decomp} is bounded by the sum of the following three quantities.
    \begin{equation}\label{t1}
        2^{k(1-d)-\alpha' j}\int |\mathcal{Q}_j^1\tilde{\chi}_k\cdot\mathcal{P}_jf(x)|^pdx,
    \end{equation}
    \begin{equation}\label{t2}
        2^{k(1-d)}\int|\mathcal{Q}_j^2\tilde{\chi}_k\cdot\mathcal{P}_jf(x)|^pdx,
    \end{equation}
    \begin{equation}\label{t3}
        2^{k(1-d)-\alpha' j'}\sum_{j'>j+10}\int |\mathcal{P}_{j'}\tilde{\chi}_k\cdot\mathcal{P}_jf(x)|^pdx.
    \end{equation}
    Here, $\alpha'>0$ is a suitable number which is smaller than $\alpha$. From the spirit of the uncertainty principle, $\widehat{\tilde{\chi}_k}$ is essentially supported in a ball of radius $2^{-k}$ centered at the origin. Thus, we may expect that \eqref{t1} will be the main term and the others are error terms. We first see \eqref{t1}. By definition, we have
    \begin{align*}
        \mathcal{Q}_j^1\tilde{\chi}_k(x) & =\int \widehat{\tilde{\chi}_k}(\xi)\chi_{<j-10}(\xi)e^{ix\cdot\xi}d\xi\\
        & =\int \tilde{\chi}_k(y)\big[ \int e^{i(x-y)\cdot\xi}\chi_{<j-10}(\xi)d\xi \big]dy.
    \end{align*}
    This implies that when $|x|\sim 2^m$ for $m\in\Z$,
    \begin{equation}\label{t1 est}
        |\mathcal{Q}_j^1\tilde{\chi}_k(x)|\lesssim \begin{cases}
        2^{-N(j+max\{m.k\})}, & |m-k|\geq 10,\\
        1 & |m-k|<10
    \end{cases}
    \end{equation}
    holds for any $N>0$. Thus, \eqref{t1} is bounded by the right hand side of \eqref{2^k maximal} as desired.

    Next, we see \eqref{t2}. Similarly as before, we obtain that when $|x|\sim 2^m$ for $m\in\Z$,
    \begin{equation}\label{t2 est}
        |\mathcal{Q}_j^2\tilde{\chi}_k(x)|\lesssim \begin{cases}
        2^{-N(j+max\{m.k\})}, & |m-k|\geq 10,\\
        2^{-N(j+k)} & |m-k|<10
    \end{cases}
    \end{equation}
    holds for any $N>0$. Again, this implies \eqref{t2} is bounded by the right hand side of \eqref{2^k maximal}, as desired.

    Finally, we see \eqref{t3}. The proof is almost similar as before. By the same argument, we obtain that when $|x|\sim 2^m$ for $m\in\Z$,
    \begin{equation}\label{t3 est}
        |\mathcal{P}_{j'}\tilde{\chi}_k(x)|\lesssim \begin{cases}
        2^{-N(j'+\max\{ m,k\})}, & |m-k|\geq 10,\\
        2^{-N(j'+k)}, & |m-k|<10
    \end{cases}
    \end{equation}
    holds for any $N>0$. Since $j'>j+10$, by taking summation in $j'$, we obtain that \eqref{t3} is bounded by the right hand side of \eqref{2^k maximal}, which completes the proof.
    \end{proof}

\begin{proof}[Proof of Theorem \ref{main thm}]
    By a scaling argument, it suffices to consider
    \[ \tilde{M}f(x)=\sup_{0<t<1}|A_tf(x)|. \]
    For each $n\geq 0$, we can easily check that
    \[ \sup_{t\sim 2^{-n}}|A_t\mathcal{P}_{<n}f(x)|\lesssim M_{HL}f(x) \]
    holds independent of $n$, where $M_{HL}$ is the Hardy-Littlewood maximal operator on $\R^d$. Therefore, we have
    \begin{align}\label{first global decomp}
        & \int |\tilde{M}f(x)|^p|x|^{1-d}dx\\
        & \lesssim \int \big(\sup_{n\geq 0}\sup_{t\sim 2^{-n}}|A_t\mathcal{P}_{<n}f(x)|^p+\sup_{n\geq 0}\sup_{t\sim 2^{-n}}|\sum_{j\geq n}A_t\mathcal{P}_jf(x)|^p\big)|x|^{1-d}dx\nonumber \\ 
        & \lesssim \int |M_{HL}f(x)|^p|x|^{1-d}dx +\sum_{k\in\Z,n\geq 0}\int_{|x|\sim 2^{-k}}\sup_{t\sim 2^{-n}}|\sum_{j\geq n}A_t\mathcal{P}_jf(x)|^p|x|^{1-d}dx.\nonumber
    \end{align}
    It is well known that
    \[ \int |M_{HL}f(x)|^p|x|^{1-d}dx\lesssim \int |f(x)|^p|x|^{1-d}dx \]
    since $|x|^{1-d}$ is an $A_p$-weight. Thus, we only consider the second part of the right handed side of \eqref{first global decomp}.

    For simplicity, we denote $f_n(x)=f(x/2^n)$ for a function $f$. Notice that $[\mathcal{P}_jf]_n=\mathcal{P}_{j-n}f_n$. Then, we have
    \begin{align*}
        \int_{|x|\sim 2^{-k}}\sup_{t\sim 2^{-n}}|\sum_{j\geq n}A_t\mathcal{P}_jf(x)|^p|x|^{1-d}dx =2^{-n}\int_{|x|\sim 2^{-k+n}}|\sum_{j\geq n}M_{j-n}f_n(x)|^p|x|^{1-d}dx.
    \end{align*}
    Now there are two cases, $k<n$ and $k\geq n$. We first handle the case $k<n$. In this case, we have
    \begin{align}\label{k<n case}
        & \sum_{n\geq 0}\sum_{k<n}2^{-n}\int_{|x|\sim 2^{-k+n}}|\sum_{j\geq n}M_{j-n}f_n(x)|^p|x|^{1-d}dx\\
        & =\sum_{n\geq 0}2^{-n}\sum_{k>0}\int_{|x|\sim 2^k}|\sum_{j\geq 0}M_jf_n(x)|^p|x|^{1-d}dx\nonumber \\ 
        & \leq \sum_{n\geq 0}2^{-n}\sum_{k>0}\big( \sum_{j\geq 0}(\int_{|x|\sim 2^k}|M_jf_n(x)|^p|x|^{1-d}dx)^{\frac{1}{p}} \big)^p \nonumber
    \end{align}
    by the triangle inequality. By Lemma \ref{2^k maximal lemma}, $[\mathcal{P}_jf]_n=\mathcal{P}_{j-n}f_n$, and H\"oler's inequality, we have
    \begin{align*}
        \eqref{k<n case} \lesssim & \sum_{n,k,j\geq 0}2^{-\beta' j}\int_{|x|\sim 2^{k-n}}|\mathcal{P}_{j+n}f(x)|^p|x|^{1-d}dx\\
        & +\sum_{n,k,j\geq 0,\,m\in\Z}2^{-N(j+k)-(d-1)|m|}\int_{|x|\sim 2^{m-n}}|\mathcal{P}_{j+n}f(x)|^p|x|^{1-d}dx\\
        \eqqcolon & I_1+ I_2
    \end{align*}
    where $\beta'>0$ is a suitable number smaller than $\beta$ in Lemma \ref{2^k maximal lemma}.
    
    By changing summations, we get
   \[ I_1= \sum_{j\geq 0, k\in\Z}2^{-\beta' j}\int_{|x|\sim 2^k}\sum_{n\geq \max\{ 0,-k\}}|\mathcal{P}_{j+n}f(x)|^p|x|^{1-d}dx. \]
   By covering $\{ x : |x|\sim 2^k\}$ by cubes with sidelength $\sim 2^k$ which are finitely overlapping and contained in a bigger annulus $\{ x : |x|\sim 2^k\}$, we obtain
   \[ \int_{|x|\sim 2^k}\sum_{n\geq \max\{ 0,-k\}}|\mathcal{P}_{j+n}f(x)|^pdx\lesssim \int |\mathcal{P}_{>\max\{ 0,-k\}}f(x)|^p(1+2^{-k}|x|)^{-N}dx \]
   by Lemma \ref{local ortho}, for a sufficiently large $N>0$. We claim that
   \begin{equation}\label{freq larger est}
       \int |\mathcal{P}_{>-k}f(x)|^p(1+2^{-k}|x|)^{-N}dx\lesssim \int |f(x)|^p(1+2^{-k}|x|)^{-N}dx
   \end{equation}
   for any $k\in\Z$. Indeed, since $\mathcal{P}_{<-k}f=f\ast \psi_{k}$ where $|\psi_k(x)|\lesssim 2^{-dk}(1+2^{-k}|x|)^{-N}$ for any $N>0$, \eqref{freq larger est} comes from the following simple inequality:
   \[ 2^{-dk}\int (1+2^{-k}|x-y|)^{-N}(1+2^{-k}|x|)^{-N}dx\lesssim (1+2^{-k}|y|)^{-N}. \]
   It suffices to check this for $k=0$ by scaling, and it is straightforward. Therefore,
   \begin{align}\label{I_1 final}
       I_1 & \lesssim \sum_{j\geq 0}2^{-\beta' j}\int (|\mathcal{P}_{>0}f(x)|^p+|f(x)|^p)\sum_{k\in\Z}2^{k(1-d)}(1+2^{-k}|x|)^{-N}dx\\
       & \lesssim \int (|\mathcal{P}_{>0}f(x)|^p+|f(x)|^p)|x|^{1-d}dx \nonumber \\
       & \lesssim \int |f(x)|^p|x|^{1-d}dx\nonumber
   \end{align}
   as desired, since $|x|^{1-d}$ is an $A_p$-weight. For the later use, we remark the following inequality:
   \begin{equation}\label{singi weight}
       \sum_{n\in\Z}2^{n(d-1)}(1+2^n|x|)^{-N}\lesssim |x|^{1-d}.
   \end{equation}

   Similarly, we now control $I_2$. Since
   \[\sum_{n,k,j,m\geq 0}2^{-N(j+k)-(d-1)|m|}\int_{|x|\sim 2^{m-n}}|\mathcal{P}_{j+n}f(x)|^p|x|^{1-d}dx\lesssim I_1, \]
   we have that
   \[ I_2\lesssim I_1+\sum_{n,j\geq 0}2^{-Nj+n(d-1)}\int_{|x|\lesssim 2^{-n}}|\mathcal{P}_{j+n}f(x)|^pdx \]
   holds. As before, we use the local orthogonality, Lemma \ref{local ortho} with parameter $j$. Then, we obtain
   \[ \sum_{j\geq 0}\int_{|x|\lesssim 2^{-n}}|\mathcal{P}_{j+n}f(x)|^pdx\lesssim \int |\mathcal{P}_{>n}f(x)|^p(1+2^n|x|)^{-N}dx \]
   for any $N>0$. By \eqref{singi weight}, we finally obtain
   \[ I_2\lesssim \int |f(x)|^p|x|^{1-d}dx \]
   as desired, by \eqref{freq larger est}.

    Now we consider the other case, $k\geq n$. In this case, we have
    \begin{align}\label{k>n eq1}
        & \sum_{n\geq 0}\sum_{k\geq n}2^{-n}\int_{|x|\sim 2^{-k+n}}|\sum_{j\geq n}M_{j-n}f_n(x)|^p|x|^{1-d}dx\\ \nonumber
        & =\sum_{n\geq 0}2^{-n}\sum_{k\geq 0}\int_{|x|\sim 2^{-k}}|\sum_{j\geq 0}M_jf_n(x)|^p|x|^{1-d}dx.
    \end{align}
    By H\"older's inequality,
    \begin{align}\label{key holder}
        |\sum_{j\geq 0}M_jf_n(x)|^p & \leq (\sum_{j\geq 0}|M_jf_n(x)|^p2^{\delta_0|j-k|})(\sum_{j\geq 0}2^{-\delta_0|j-k|/(p-1)})^{p-1}\\
        & \lesssim \sum_{j\geq 0}|M_jf_n(x)|^p2^{\delta_0|j-k|}\nonumber
    \end{align}
    holds for $\delta_0>0$. Putting \eqref{key holder} into \eqref{k>n eq1}, we obtain
    \[ \eqref{k>n eq1}\lesssim \sum_{n\geq 0}2^{-n}\sum_{k,j\geq 0}2^{\delta_0|j-k|}\int_{|x|\sim 2^{-k}} |M_jf_n(x)|^p|x|^{1-d}dx.  \]
    By Proposition \ref{mj sph}, we obtain
    \begin{align}\label{applying prop}
        &\int_{|x|\sim 2^{-k}}|M_jf_n(x)|^p|x|^{1-d}dx\\
        &\lesssim 2^{-\delta|j-k|}\int_{|x|\sim 1} |\mathcal{P}_jf_n(x)|^p|x|^{1-d}dx+ \sum_{m\in\Z}2^{-Nj-(d-1)|m|-\delta k} \int_{|x|\sim 2^{m}}|\mathcal{P}_jf_n(x)|^p|x|^{1-d}dx \nonumber\\
        & \eqqcolon I_3+I_4\nonumber
    \end{align}
    when $p=2$ for $d\geq 3$, and when $p>2$ for $d=2$. We first see the contribution from $I_3$. We choose $\delta_0=\delta/2$ so that we get exponential decay. From the relation $[\mathcal{P}_{j+n}f]_n=\mathcal{P}_jf_n$, we have
    \begin{align*}
        \sum_{n\geq 0}2^{-n}\sum_{k,j\geq 0}2^{\delta_0|j-k|}I_3 & \lesssim \sum_{n,k,j\geq 0}2^{-\delta|j-k|/2}\int_{|x|\sim 2^{-n}}|\mathcal{P}_{j+n}f(x)|^p|x|^{1-d}dx\\
        & \lesssim \sum_{n,j\geq 0}\int_{|x|\sim 2^{-n}}|\mathcal{P}_{j+n}f(x)|^p|x|^{1-d}dx.
    \end{align*}
    For each fixed $n\geq 0$, we use the local orthogonality. As above, by covering $\{ x : |x|\sim 2^{-n} \}$ by cubes with sidelength $\sim 2^{-n}$ which are finitely overlapping and contained in a bigger annulus $\{ x : |x|\sim 2^{-n} \}$, we have
    \begin{align*}
        \sum_{j\geq 0}\int_{|x|\sim 2^{-n}}|\mathcal{P}_{j+n}f(x)|^p|x|^{1-d}dx & \lesssim 2^{n(d-1)}\int|\mathcal{P}_{>n}f(x)|^p(1+2^{n}|x|)^{-N}dx\\
        & \lesssim 2^{n(d-1)}\int |f(x)|^p(1+2^{n}|x|)^{-N}dx.
    \end{align*}
    By \eqref{singi weight}, the contribution from $I_3$ is as desired.

    Now we see the contribution from $I_4$. Choosing sufficiently large $N$, $\delta_0=\delta/2$, and the relation $[\mathcal{P}_{j+n}f]_n=\mathcal{P}_jf_n$, we get
    \begin{align*}
        &\sum_{n\geq 0}2^{-n}\sum_{k,j\geq 0}2^{\delta_0|j-k|}I_4\\
        & \lesssim \sum_{n,k,j\geq 0,\, m\in\Z}2^{-Nj-(d-1)|m|-\delta k/2}\int_{|x|\sim 2^{m-n}}|\mathcal{P}_{j+n}f(x)|^p|x|^{1-d}dx.
    \end{align*}
    Splitting the summation in $m\in\Z$ by $m\geq 0$ and $m<0$, this is bounded by the sum of the following two quantities:
    \[ I_5=\sum_{n,j\geq 0}2^{-Nj+n(d-1)}\int_{|x|\lesssim 2^{-n}}|\mathcal{P}_{j+n}f(x)|^pdx, \]
    \[ I_6=\sum_{n,j,m\geq 0} 2^{-Nj-m(d-1)}\int_{|x|\sim 2^{m-n}}|\mathcal{P}_{j+n}f(x)|^p|x|^{1-d}dx. \]
    Indeed, these are already bounded by $I_2$ and thus, we obtain
    \[ \sum_{n\geq 0}2^{-n}\sum_{k,j\geq 0}2^{\delta_0|j-k|}I_4\lesssim \int |f(x)|^p|x|^{1-d}dx \]
    and this concludes the proof.    
\end{proof}

\section{Proof of Proposition \ref{mj sph}}
The proof of Proposition \ref{mj sph} is different depending on whether $j\leq k$ or $j> k$. In the former case, we use Lemma \ref{sp har lemma}, the property of spherical harmonics. In the latter case, we use the argument in \cite{DV}.
\begin{proof}[Proof of Proposition \ref{mj sph} when $j\leq k$]
    We first claim the following inequality for some $\delta(d,p)>0$ when $d/(d-1)<p< \infty$:
    \begin{equation}\label{jk piece without local}
        \Vert \chi_{-k}(|\cdot|)M_jf\Vert_{L^p(|x|^{1-d})}\lesssim 2^{-\delta(k-j)}\Vert \mathcal{P}_jf\Vert_{L^p}.
    \end{equation}
    Note that the estimate does not depend on whether $k$ is bigger than $j$ or not, and we have the $L^p$-norm without the weight on the right hand side. We prove \eqref{jk piece without local} with $\delta=0$ when $p=d/(d-1)$, and $\delta(d,2)>0$ when $p=2$. Also, we have a trivial estimate with $\delta=0$ when $p\infty$. Then, \eqref{jk piece without local} follows by interpolation. When $p=d/(d-1)$, the estimate is already proved in \cite{DV}. Indeed, inequality (3) in the proof of Theorem 10 in \cite{DV} has $2^{j\epsilon}$-loss. However, authors provided sharp estimates without $\epsilon$-loss restricting $x$ in a small annulus with width $2^{-k}$. Thus, we only focus on $p=2$. Let
    \begin{equation}\label{sp har expansion in proof}
        \widehat{f}(\xi)=\sum_{k=0}^{\infty}\sum_{i=1}^{d_k}b_i^k(\rho)Y_i^k(\omega)
    \end{equation}
    for $\xi=\rho\omega$. By \eqref{bessel} and Lemma \ref{sobo}, it suffices to prove the following:
    \begin{equation}\label{goal of j<k}
        \Vert \chi_{-k}(|x|)\int e^{i(x\cdot\xi+t|\xi|)}\chi_j(|\xi|)\widehat{f}(\xi)d\xi\Vert_{L^2_{x,t}(|x|^{1-d};\R^d\times[1,2])}\lesssim 2^{\frac{d-2}{2}j-\delta(k-j)}\Vert \mathcal{P}_jf\Vert_{L^2}.
    \end{equation}
    Putting \eqref{sp har expansion in proof} into the square of the left hand side of \eqref{goal of j<k}, we get
    \begin{align*}
        &\int \Big\vert \chi_{-k}(r)\int e^{i(r\rho \theta\cdot\omega+t\rho)}\chi_j(\rho)\sum_{k,i}b_i^k(\rho)Y_i^k(\omega)\rho^{d-1}d\rho d\omega \Big\vert^2 drd\theta dt\\
        =&\sum_{k,i}\int \Big\vert \chi_{-k}(r)\int e^{it\rho}\chi_j(\rho)b_i^k(\rho)\phi_k(-r\rho)\rho^{d-1}d\rho \Big\vert^2 drdt
    \end{align*}
    by \eqref{harmonic piece fourier transform} and the orthogonality. For each $k,i$, Plancherel's theorem implies
    \begin{align*}
        &\int \Big\vert \chi_{-k}(r)\int e^{it\rho}\chi_j(\rho)b_i^k(\rho)\phi_k(-r\rho)\rho^{d-1}d\rho \Big\vert^2 drdt\\
        \leq &\int \Big\vert \chi_{-k}(r)\chi_j(\rho)b_i^k(\rho)\phi_k(-r\rho)\rho^{d-1} \Big\vert^2 drd\rho\\
        \approx &\int |\chi_j(\rho)b_i^k(\rho)|^2\rho^{d-1}\Big\{ \int |\phi_k(-r)\chi(2^{k-j}r)|^2\rho^{d-2} dr \Big\}d\rho.
    \end{align*}
    By Lemma \ref{sp har lemma}, we obtain
    \[ \int |\phi_k(-r)\chi(2^{k-j}r)|^2\rho^{d-2}dr\lesssim 2^{(d-2)j+j-k} \]
    when $\rho\sim 2^j$, and the implicit constants are independent of $k,i$. Thus, we can deduce \eqref{goal of j<k} when $p=2$, and thus, completes the proof of \eqref{jk piece without local}.

    Now we need to use localization arguments as in the proof of Lemma \ref{2^k maximal lemma} to reach the goal. Since $M_c$ is a local maximal operator, we obtain
    \begin{equation}\label{local cutoff est}
        \int_{|x|\sim 2^{-k}}|M_c [(1-\tilde{\chi})\mathcal{P}_jf](x)|^p|x|^{1-d}dx=0.
    \end{equation}
    As before, by the triangle inequality,
    \begin{align}\label{localization triangle}
        &\int_{|x|\sim 2^{-k}}|M_jf(x)|^p|x|^{1-d}dx\\
        &\lesssim (\sum_{i=1,2}\Vert \chi_{-k}M_c[\mathcal{Q}_j^i\tilde{\chi}\cdot\mathcal{P}_jf] \Vert_{L^p(|x|^{1-d})} +\sum_{j'>j+10}\Vert \chi_{-k}M_c[\mathcal{P}_{j'}\tilde{\chi}\cdot\mathcal{P}_jf]  \Vert_{L^p(|x|^{1-d})} )^p.\nonumber
    \end{align}
    Note that $[\mathcal{Q}_j^i\tilde{\chi}\cdot\mathcal{P}_jf]^{\wedge}$ is supported on $\{ \xi : |\xi|\sim 2^j  \}$, and $[\mathcal{P}_{j'}\tilde{\chi}\cdot\mathcal{P}_jf]^{\wedge}$ is supported on $\{ \xi : |\xi|\sim 2^{j'}  \}$ when $j'>j+10$. Thus, we can apply \eqref{jk piece without local}. As we have seen already, the main term will come from $i=1$, and the other terms will be error terms. In the proof of Lemma \ref{2^k maximal lemma}, we obtained estimates for $\mathcal{Q}_j^i\tilde{\chi}_k$ and $\mathcal{P}_{j'}\tilde{\chi}_k$, \eqref{t1 est} and \eqref{t3 est}. We use these estimates for $k=0$. The calculation is almost the same with that of the proof of Lemma \ref{2^k maximal lemma}. Thus, the proposition is concluded if we can handle the term with $i=2$.
    
    When $i=2$, we have
    \begin{align*}
        &\Vert \chi_{-k}M_c[\mathcal{Q}_j^2\tilde{\chi}\cdot\mathcal{P}_jf]\Vert_{L^p(|x|^{1-d})} \\
        &\leq \Vert \chi_{-k}M_c\mathcal{P}_{<0}[\mathcal{Q}_j^2\tilde{\chi}\cdot\mathcal{P}_jf] \Vert_{L^p(|x|^{1-d})} + \sum_{0\leq m\leq j}\Vert \chi_{-k}M_c\mathcal{P}_m[\mathcal{Q}_j^2\tilde{\chi}\cdot\mathcal{P}_jf]\Vert_{L^p(|x|^{1-d})}.
    \end{align*}
    We momentarily denote $g(x)=\mathcal{Q}_j^2\tilde{\chi}\cdot\mathcal{P}_jf(x)$. The first term on the right hand side is bounded by
    \[ 2^{\frac{k(d-1)}{p}}\Big(\int_{|x|\sim 2^{-k}} \Big|\int \frac{|g(x-z)|}{(1+|z|)^N}dz\Big|^p dx\Big)^{\frac{1}{p}}\lesssim 2^{-\frac{k}{p}}\Big(  \int \frac{|g(x)|^p}{(1+|x|)^{N/2}}dx \Big)^{\frac{1}{p}} \]
    for any $N>0$. Also, the second term is bounded by
    \[ \sum_{0\leq m<j} 2^{-\delta(k-m)}\Vert g\Vert_{L^p}\lesssim 2^{-\delta(k-j)}\Vert g\Vert_{L^p}. \]
    Therefore, by the estimate \eqref{t2 est}, the proof is concluded.
\end{proof}

\begin{proof}[Proof of Proposition \ref{mj sph} when $j>k$]
    Indeed, it is essentially proved in \cite{DV}. But we provide an alternative proof. We first claim the following inequality for some $\delta(d,p)>0$ when $d/(d-1)<p<\infty$:
    \begin{equation}\label{j>k piece without local}
        \Vert \chi_{-k}(|\cdot|)M_jf\Vert_{L^p(|x|^{1-d})}\lesssim 2^{-\delta(j-k)}\Vert \mathcal{P}_j f\Vert_{L^p}.
    \end{equation}
    Again, \eqref{j>k piece without local} with $\delta=0$ when $p=d/(d-1),\infty$ is already known. Thus, we choose a suitable $d/(d-1)<p<\infty$ and prove \eqref{j>k piece without local} with $\delta(d,p)>0$. By \eqref{bessel} and Lemma \ref{sobo}, we may replace the left hand side of \eqref{j>k piece without local} by
    \begin{equation}\label{wave operator change}
        2^{j(-\frac{(d-1)}{2}+\frac{1}{p})+\frac{(d-1)k}{p}}\Vert \chi_{-k}(|x|)\mathcal{W}\mathcal{P}_j f \Vert_{L^p_{x,t}(\R^d\times [1,2])}.
    \end{equation}
    We use the critical local smoothing estimate Proposition \ref{critical pp locals} when $d\geq 4$. Precisely, we have
    \[ \Vert \mathcal{W}\mathcal{P}_j f\Vert_{L^p_{x,t}(\R^d\times [1,2])}\lesssim 2^{j(\frac{d-1}{2}-\frac{d}{p})}\Vert \mathcal{P}_jf\Vert_{L^p} \]
    when $p>2(d-1)/(d-3)$. This implies \eqref{wave operator change} is bounded by $2^{-(d-1)(j-k)/p}\Vert \mathcal{P}_jf\Vert_{L^p}$. Therefore, \eqref{j>k piece without local} for the case $d\geq 4$ is proved.

    When $d=2,3$, it is not known that the critical local smoothing estimate holds for $p\neq 2,\infty$. Instead, we use the critical $L^p$-$L^q$ local smoothing estimate, Proposition \ref{critical pq locals}.
    
    Indeed, we have
    \[ \Vert \mathcal{W}\mathcal{P}_j f\Vert_{L^q_{x,t}(\R^d\times [1,2])}\lesssim 2^{j(\frac{d-1}{2}+\frac{1}{p}-\frac{d+1}{q})} \Vert \mathcal{P}_j f\Vert_{L^p} \]
    when $\frac{1}{p}+\frac{d+1}{(d-1)q}\leq 1$, $q>14/3$ for $d=2$ and $q>4$ for $d=3$. This implies
    \[ \Vert \chi_{-k}(|\cdot|) M_j f\Vert_{L^q}\lesssim 2^{j(\frac{1}{p}-\frac{d}{q})}\Vert \mathcal{P}_jf\Vert_{L^p}. \]
    By H\"older's inequality, we have
    \begin{align*}
        \Vert \chi_{-k}(|\cdot|) M_jf\Vert_{L^p(|x|^{1-d})} & \lesssim \Vert \chi_{-k}(|\cdot|)M_jf\Vert_{L^q_{x,t}(|x|^{-d+\frac{q}{p}})}\\
        & \lesssim 2^{-(j-k)(\frac{d}{q}-\frac{1}{p})}\Vert \mathcal{P}_jf\Vert_{L^p}.
    \end{align*}
    Therefore, we get \eqref{j>k piece without local} when $d/q>1/p$. By Proposition \ref{critical pq locals}, we can choose such $p,q$.

    Now we use the localization argument to prove \eqref{mj sph est}. We recall \eqref{local cutoff est} and \eqref{localization triangle}. As before, we use \eqref{j>k piece without local} to $\mathcal{Q}_j^1\tilde{\chi}\cdot\mathcal{P}_j f$ and $\mathcal{P}_{j'}\tilde{\chi}\cdot\mathcal{P}_jf$ when $j'>j+10$. Using \eqref{t1 est} and \eqref{t3 est}, the proof is completely the same. When $i=2$, the proof is simpler than the case of $j\geq k$. We just use the boundedess of $M_c$ and obtain
    \[ \Vert \chi_{-k}M_c[\mathcal{Q}_j^2\tilde{\chi}\cdot\mathcal{P}_jf]\Vert_{L^p(|x|^{1-d})}\lesssim 2^{\frac{k(d-1)}{p}}\Vert \mathcal{Q}_j^2\tilde{\chi}\cdot\mathcal{P}_j f\Vert_{L^p(1;B_{100})} \]
    where $B_{100}$ is the ball centered at the origin and radius $100$. The proof is concluded by \eqref{t2 est} since $j>k$ by choosing a sufficiently large $N>0$.
\end{proof}

\end{document}